\title[Hilbert schemes are degenerate]{Grothendieck-Pl\"ucker images of Hilbert schemes are degenerate}
\date{\today}
\author{Donghoon Hyeon}
\author{ Hyungju Park}
\address{Department of Mathematical Sciences, Seoul National University, Seoul, R. O. Korea \\Tel: +82-2-880-2666, Fax: +82-2-887-4694}
 \email{dhyeon@snu.ac.kr}
 \email{parkhyoungju@snu.ac.kr}
 \keywords{Generic initial ideals \and Schubert cell decomposition}
\subjclass[2010]{14C05,13P10}
\newtheorem{theorem}{Theorem}[section]
\newtheorem*{theoremquote}{Theorem}
\newtheorem{lemma}[theorem]{Lemma}
\newtheorem{prop}[theorem]{Proposition}
\newtheorem{coro}[theorem]{Corollary}
\theoremstyle{definition}
\newtheorem{defn}[theorem]{Definition}
\def\mbb{\mathbb}
\def\a{\alpha}
\def\b{\beta}
\def\bP{\mbb P}
\def\bZ{\mbb Z}
\def\bN{\mbb N}
\def\bQ{\mbb Q}
\def\GL{{\mathrm GL}}
\def\bP{\mathbb P}
\def\bG{\mathbb G}
\def\inj{\hookrightarrow}
\def\inj{\hookrightarrow}
\def\Gr{Gr}
\def\cO{\mathcal O}
\DeclareMathOperator{\sch}{{\mathrm C}}
\DeclareMathOperator{\hilb}{{\mathrm Hilb}}
\def\ba{{\vec{\a}}}
\def\bbeta{{\vec{\beta}}}
\def\gin{\textup{Gin}\, }
\def\bQ{\mathbb Q}
\def\ii{{\bf in}\,}
\newtheorem{remark}[theorem]{Remark}
\begin{document}

\begin{abstract} We study the decompositions of Hilbert schemes induced by the Schubert cell decomposition of the Grassmannian variety and show that Hilbert schemes admit a stratification into locally closed subschemes along which  the generic initial ideals remain the same.
We give two applications: First, we give a completely  geometric proofs of the existence of the generic initial ideals and of their Borel fixed properties. Secondly, we prove that when a Hilbert scheme of nonconstant Hilbert polynomial is embedded by the Grothendieck-Pl\"ucker embedding of a high enough degree, it must be degenerate.

\end{abstract}

\thanks{The authors thank H. Loh for pointing out \cite{Marinari-Ramella} which provides an important ingredient in the proof of Theorem~\ref{T:degenerate}.
We thank Hwangrae Lee who made an observation in Section~\ref{S:other} regarding the relation between our theorem and the work of Notari and Spreafico.
We also greatly benefited from conversations with Young-Hoon Kiem. The first named author was supported by the Research Resettlement Fund for the new faculty of Seoul National University, and the following grants funded by the government of Korea:
NRF grant 2011-0030044 (SRC-GAIA) and NRF grant NRF-2013R1A1A2010649}
\maketitle

\section{Introduction and preliminaries}
In this paper, we give a geometric study of generic initial ideals. Given an ideal $I$ of a polynomial ring $k[x_0,\dots,x_n]$ and a monomial order $\prec$, the generic initial ideal $\gin_\prec(I)$ of $I$ is roughly defined as the monomial ideal generated by the initial terms of $I$ after a generic coordinate change. Its existence and basic properties were first worked out by Galligo \cite{Galligo} in characteristic zero  and subsequent works of Bayer and Stillman \cite{Bayer-Stillman} and of Pardue \cite{Pardue} established fundamental properties in prime characteristic. Generic initial ideals found useful applications in the study of Hilbert schemes \cite{Hartshorne}, and in the study of the Castelnuovo-Mumford regularity and the complexity of Gr\"obner basis computation \cite{BS} just to name a few.

We shall take Green's geometric viewpoint of initial ideals \cite{Green} and prove further properties about generic initial ideals. Understanding the geometry of initial ideals leads to a more conceptual and geometric proof of the existence of the generic initial ideals (Proposition~\ref{P:sch-decompII} and Definition~\ref{D:gin}). We also obtain a completely geometric proof of the Borel fixedness (Proposition~\ref{P:Borel}), which is the most important combinatorial property of generic initial ideals. In essence, it is not a new proof but more of a reformulation since it shares with the algebraic proof the key component, which is considering  the non-vanishing of the coefficient of the largest Pl\"ucker  monomial. Nonetheless, we do believe that our geometric reformulation is a better display of the essence of the proof, and it has the obvious advantage of being terse and to the point once we set up the machinery.

We also prove that the Hilbert schemes admit a stratification into locally closed subschemes consisting of ideals with the same generic initial ideals. More precisely,

\begin{theoremquote} There is a finite decomposition
\[
\hilb^P(\bP(V)) = \coprod_{\ba\in\mathcal G} \Gamma_\ba
\]
into locally closed subschemes $\Gamma_\ba = \{ [I]  \, | \, \gin_\prec(I) = I_\ba\}$ where $\ba$ runs through all indices such that $I_\ba$ are Borel fixed.  Moreover, for each irreducible component $H$ of $\hilb^P(\bP(V))$, there is a unique maximal index $\ba_{H} \in \mathcal G$ such that $\Gamma_{\ba_H}$ is Zariski open dense in $H$.
\end{theoremquote}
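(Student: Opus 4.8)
The plan is to carry everything over to the Grassmannian $\Gr$ through the Grothendieck-Pl\"ucker embedding of a sufficiently high degree $d$, under which $[I]\in\hilb^P(\bP(V))$ corresponds to the degree-$d$ piece $I_d\in\Gr$, and to recover $\gin_\prec(I)$ from the Schubert cell decomposition of $\Gr$ determined by the monomial order $\prec$. Write $C_\ba$ for the locally closed Schubert cell of those $W\in\Gr$ with initial space $\ii W$ equal to the span of the degree-$d$ monomial generators of $I_\ba$, so that $\Gr=\coprod_\ba C_\ba$ is finite; since each cell closure is a union of cells, I would order the indices by $\ba\succeq\bbeta\iff C_\bbeta\subseteq\overline{C_\ba}$, a partial order for which $\overline{C_\ba}=\coprod_{\bbeta\preceq\ba}C_\bbeta$ and the open dense cell is maximal. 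By the existence of the generic initial ideal (Proposition~\ref{P:sch-decompII}, Definition~\ref{D:gin}) and its Borel fixedness (Proposition~\ref{P:Borel}), each $[I]$ has a well-defined Borel fixed $\gin_\prec(I)=I_\ba$, characterized by $g\cdot I_d\in C_\ba$ for $g$ in a dense open subset of the irreducible group $G=\GL(V)$. This already gives the finite set-theoretic decomposition $\hilb^P(\bP(V))=\coprod_{\ba\in\mathcal G}\Gamma_\ba$ indexed by the finitely many Borel fixed $I_\ba$.

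The main step is to upgrade this to a decomposition into locally closed subschemes. For each index I would introduce the $G$-invariant core $X_\ba:=\bigcap_{g\in G}g^{-1}\,\overline{C_\ba}$, the largest $G$-invariant closed subset of $\overline{C_\ba}$; it is closed in $\Gr$, and $\bbeta\preceq\ba$ forces $X_\bbeta\subseteq X_\ba$. The key identity to establish is
\[
X_\ba\cap\hilb^P(\bP(V))=\coprod_{\bbeta\preceq\ba}\Gamma_\bbeta .
\]
For the inclusion $\supseteq$, if $\gin_\prec(I)=I_\bbeta$ with $\bbeta\preceq\ba$, then the closed set $\{g:g\cdot I_d\in\overline{C_\bbeta}\}$ contains the dense subset $\{g:g\cdot I_d\in C_\bbeta\}$, hence is all of $G$, so $I_d\in X_\bbeta\subseteq X_\ba$. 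For $\subseteq$, if $I_d\in X_\ba$ then $g\cdot I_d\in\overline{C_\ba}$ for every $g$, so the generic cell $C_{\gin_\prec(I)}$ meets $\overline{C_\ba}$ and therefore lies inside it, giving $\gin_\prec(I)=I_\bbeta$ with $\bbeta\preceq\ba$. Granting this, $\Gamma_\ba=\bigl(X_\ba\cap\hilb^P(\bP(V))\bigr)\setminus\bigcup_{\bbeta\prec\ba}\bigl(X_\bbeta\cap\hilb^P(\bP(V))\bigr)$ is a closed set minus a finite union of closed sets, hence locally closed, which proves the first assertion.

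For the component statement, I would fix an irreducible component $H$ and use that the finitely many $\Gamma_\ba\cap H$ partition $H$ into locally closed pieces. The generic point of $H$ lies in exactly one of them, $\Gamma_{\ba_H}\cap H$, which is therefore dense; a dense locally closed subset $U\cap Z$ of the irreducible $H$ satisfies $Z=H$, so the piece equals $U\cap H$ and is open. Uniqueness is automatic, as two dense open subsets of irreducible $H$ must meet while distinct $\Gamma_\ba$ are disjoint. Maximality of $\ba_H$ then follows from the key identity: $H=\overline{\Gamma_{\ba_H}\cap H}\subseteq X_{\ba_H}\cap\hilb^P(\bP(V))=\coprod_{\bbeta\preceq\ba_H}\Gamma_\bbeta$, so every $\ba$ with $\Gamma_\ba\cap H\neq\emptyset$ satisfies $\ba\preceq\ba_H$.

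I expect the main obstacle to be the local closedness, concretely the key identity above: the delicate point is to convert the ``generic $g\in G$'' in the definition of $\gin_\prec$ into an honest closed condition uniformly over the whole family, and the $G$-invariant core is designed precisely to achieve this without tracking the dense open locus of good coordinate changes fiber by fiber. Secondary care is needed to fix the closure order so that the generic (open, dense) cell is indeed the maximal index, to endow each $\Gamma_\ba$ with its reduced locally closed subscheme structure, and to record the resulting semicontinuity $\overline{\Gamma_\ba}\subseteq\coprod_{\bbeta\preceq\ba}\Gamma_\bbeta$, i.e.\ that the generic initial ideal only degenerates to smaller indices along the boundary.
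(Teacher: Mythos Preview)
Your argument is correct, but the route differs from the paper's. The paper proceeds by induction: at each step it takes the irreducible components $H_{\ell j}$ of what remains, applies Lemma~\ref{L:basic-cell} to the irreducible set $\Psi_m(\GL(V^*)\times H_{\ell j})$ inside the Grassmannian to find the unique maximal cell it meets, pulls back that cell to an open subset of $\GL(V^*)\times H_{\ell j}$, and projects to $H_{\ell j}$ to obtain the next open stratum $\Gamma_{\ell j}$; along the way it re-derives, for each $[I]$ in the new stratum, that $\gin_\prec(I)$ exists and equals $I_{\ba^*_{\ell j}}$. Your approach instead takes the existence of $\gin$ (Proposition~\ref{P:sch-decompII}) and its Borel fixedness (Proposition~\ref{P:Borel}) as input and packages the closure information via the $G$-invariant core $X_\ba=\bigcap_{g}g^{-1}\overline{C_\ba}$, from which each $\Gamma_\ba$ drops out as a difference of closed sets. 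The paper's proof is more self-contained in that it does not forward-reference Section~3 for the existence of $\gin$; your proof is cleaner once those results are available, and it yields for free the semicontinuity $\overline{\Gamma_\ba}\subseteq\coprod_{\bbeta\preceq\ba}\Gamma_\bbeta$ and the closed description of the downward unions, which the inductive argument does not make explicit. Both ultimately rest on the same Schubert-cell combinatorics, so the difference is one of organization rather than of essential content.
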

This will be established in Section~\ref{S:gin-decomposition}. As a corollary, we shall retrieve the main statement of \cite[Theorem~1.2]{Conca}.

Some authors have worked on the stratification of the Hilbert schemes of ideals according to their initial ideals with respect to a  monomial order \cite{Roggero-Terracini, Notari-Spreafico}. Bertone, Lella and Rogero considered in \cite{BLR} what is called the {\it Borel cover} (an {\it open cover}, as opposed to a stratification) of the Hilbert scheme. In Section~\ref{S:other}, we shall briefly sketch these related works and point out the major differences from our work.

As the most prominent application of the geometric study of the stratification, we demonstrate a very  important and fundamental extrinsic geometry of the Hilbert scheme: The  Grothendieck-Pl\"ucker embedding of high enough degree  is degenerate. More precisely,

\begin{theoremquote} Let $P$ be a nonconstant admissible Hilbert polynomial. For any $m \gg 0$ (especially, $m>m_0$), $\phi_{m} (\hilb^P(\bP(V)))$ is degenerate.
\end{theoremquote}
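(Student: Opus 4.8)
The plan is to exploit the $\GL(V)$-equivariance of the Grothendieck-Pl\"ucker embedding together with the stratification established above. Write $R_m=\Sym^m V^*$ and $c=\dim R_m-P(m)$, so that $\phi_m$ sends $[I]$ to the line $\bigwedge^c I_m$ inside $\bP(\bigwedge^c R_m)$. Since $\hilb^P(\bP(V))$ is $\GL(V)$-invariant and $\phi_m$ is equivariant, the image is a $\GL(V)$-invariant subvariety and its linear span $W\subseteq\bigwedge^c R_m$ is a $\GL(V)$-subrepresentation. Degeneracy of $\phi_m(\hilb^P(\bP(V)))$ is precisely the statement $W\subsetneq\bigwedge^c R_m$, equivalently that the restriction map $\rho_m\colon(\bigwedge^c R_m)^*=H^0(\bP(\bigwedge^c R_m),\mathcal{O}(1))\to H^0(\hilb^P(\bP(V)),L_m)$, with $L_m:=\phi_m^*\mathcal{O}(1)$, fails to be injective. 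Thus it suffices to prove the numerical inequality $\dim\bigwedge^c R_m> h^0(\hilb^P(\bP(V)),L_m)$, since any nonzero element of $\ker\rho_m$ is a hyperplane containing the image.

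Next I would estimate the two sides asymptotically in $m$. The left-hand side equals $\binom{\dim R_m}{P(m)}$, and since $P$ is nonconstant we have $\dim R_m=\Theta(m^n)$ while $P(m)=\Theta(m^r)$ with $1\le r=\deg P\le n-1$; hence $\dim R_m/P(m)=\Theta(m^{\,n-r})\to\infty$ and $\binom{\dim R_m}{P(m)}$ grows faster than any polynomial in $m$. For the right-hand side the key point is that $\hilb^P(\bP(V))$ is a \emph{fixed} projective scheme, of dimension $D$ independent of $m$, while $L_m=\det\mathcal{Q}_m$ is the determinant of the tautological rank-$P(m)$ quotient $\mathcal{Q}_m=\pi_*\mathcal{O}_{\mathcal Z}(m)$. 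By Grothendieck-Riemann-Roch the numerical class of $L_m$ is polynomial in $m$, so measured against a fixed ample class its degree grows only polynomially; consequently $h^0(\hilb^P(\bP(V)),L_m)$ is dominated by a polynomial in $m$ whose degree is at most $D$ times the degree in $m$ of that class. Comparing the two growth rates yields the required inequality for all $m\gg 0$, hence the degeneracy.

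Here the stratification theorem and the input from \cite{Marinari-Ramella} serve to make the bound on $W$ both conceptual and effective. From the decomposition $\hilb^P(\bP(V))=\coprod_{\ba\in\mathcal{G}}\Gamma_\ba$ one gets $W=\sum_{\ba\in\mathcal{G}}W_\ba$, where $W_\ba$ is the span of $\phi_m(\Gamma_\ba)$, and because each $I_\ba=\gin_\prec(I)$ is Borel fixed, the closure of $\phi_m(\Gamma_\ba)$ contains the single coordinate vector indexed by the monomial space $(I_\ba)_m$; this is the geometric incarnation of the ``largest Pl\"ucker monomial'' and it lets one read off the span stratum by stratum. Since $\mathcal{G}$ is finite and depends only on $P$ (the Borel-fixed ideals of Hilbert polynomial $P$, enumerated and controlled by \cite{Marinari-Ramella}), and each $\overline{\Gamma_\ba}$ has dimension at most $D$, one has $\dim W\le\sum_{\ba\in\mathcal{G}}h^0(\overline{\Gamma_\ba},L_m)$, again polynomial in $m$. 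The main obstacle is exactly this uniform control of the right-hand side: one must show that the positivity of $L_m$ grows only polynomially in $m$ on the fixed scheme $\hilb^P(\bP(V))$ — equivalently that its associated section-counting function has bounded degree in $m$ — whereas the dimension of the Pl\"ucker space explodes super-polynomially. Once this growth dichotomy is in hand the nonvanishing of $\ker\rho_m$, i.e.\ the existence of a linear form vanishing identically on the image, is automatic, and this is the asserted degeneracy.
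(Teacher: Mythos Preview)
Your strategy is genuinely different from the paper's, and its central step is not justified. The whole argument rests on the inequality $\dim\bigwedge^c R_m>h^0(\hilb^P(\bP(V)),L_m)$, which you obtain by asserting that $h^0(\hilb^P(\bP(V)),L_m)$ grows only polynomially in $m$ because ``the numerical class of $L_m$ is polynomial in $m$.'' But a polynomial bound on $c_1(L_m)$ does not automatically give a polynomial bound on $h^0$: you would need a Riemann--Roch computation on $\hilb^P(\bP(V))$ together with uniform control of the higher cohomology groups $h^i(L_m)$, and Hilbert schemes are in general non-reduced, highly singular, and reducible, so neither Serre vanishing nor Kodaira-type vanishing is available uniformly in $m$. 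Your third paragraph does not repair this: the finiteness of $\mathcal G$ (via \cite{Marinari-Ramella}) still leaves you needing the same unproven polynomial bound on each $h^0(\overline{\Gamma_\ba},L_m)$, so the stratification adds nothing to the core estimate. Even if the gap were filled, your method would give only degeneracy for $m\gg 0$, not the explicit threshold $m>m_0$ asserted in the statement.

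The paper's proof avoids all of this by exhibiting one explicit Pl\"ucker coordinate that vanishes on the image. Fix the degree reverse lexicographic order and let $\ba^{*,m}$ be the maximal index, so that $x^{\a(1)},\dots,x^{\a(d)}$ is the revlex initial segment of length $Q(m)$ in $S_m$ and $\{p_{\ba^{*,m}}\ne 0\}$ is the big Schubert cell. If some $[I]\in\hilb^P(\bP(V))$ had $p_{\ba^{*,m}}([I]_m)\ne 0$, then $J:=\ii_\prec I$ would have $J_m$ equal to that revlex segment; since $m>m_0$, $J$ is generated in degree $\le m_0<m$. A short combinatorial lemma (Corollary~\ref{C:revlex}, via Lemma~\ref{L:revlex}) then forces the Hilbert polynomial of $J$, hence of $I$, to be constant, contradicting the hypothesis on $P$. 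Thus $\phi_m(\hilb^P(\bP(V)))\subset\{p_{\ba^{*,m}}=0\}$ for every $m>m_0$. No representation theory, no GRR, no asymptotics; the hyperplane is written down explicitly, and the bound is sharp.
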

Here,  $m_o$ is the Gotzmann number of the Hilbert polynomial $P$ and $\phi_m$ is the Grothendieck-Pl\"ucker embedding (Equation~(\ref{E:Gr-Pl})). Admissible Hilbert polynomials are the Hilbert polynomials of graded ideals.  It is well known that for any admissible Hilbert polynomial $P$, there exists a lex-initial ideal whose Hilbert polynomial is $P$, and this completely classifies the admissible Hilbert polynomials: See, for instance, \cite[Appendix~C, P.299]{IK}.  This theorem will be proved in Section~\ref{S:degenerate}.

\

We work over an algebraically closed field $k$ of characteristic zero.

\section{Schubert decomposition of the Hilbert schemes}\label{S:sch-hilb}

\subsection{Schubert cells in the Grassmannian} To introduce various notations properly, and for the sake of completeness, we recapitulate the Schubert cell decomposition of Grassmannian varieties.
Let $d < n$ be positive integers, and $E$ be a $k$-vector space with an ordered basis $\{e_\a\}_{\a \in \mathcal A}$. Here, $\mathcal A$ is an index set and the order is denoted by $\prec$. We also let $\prec$ denote the induced order on $\mathcal A$.
The {\it standard Borel subgroup} $B \subset \GL(E)$ consists of $g \in \GL(E)$ such that $g.e_\a = \sum_{i=1}^{n} g_{\a\b}e_\b$ and $g_{\a\b}=0$ for all $\b \succ \a$.

Let $\Gr_dE$ be the Grassmannian variety of $d$-dimensional subspaces of $E$ and $\ba = (\a(1),\dots,\a(d)) \in \mathcal A^d$ satisfying $e_{\a(i)} \succ  e_{\a(i+1)}$.  The {\it Schubert cells} are defined to be the $B$-orbits of the $d$-dimensional coordinate subspaces i.e. for any $\ba$ as above,
\[
\sch_\ba = B.E_{\ba}
\]
where $E_\ba$ is the subspace spanned by $e_{\a(1)}, \dots, e_{\a(d)}$.

For our purpose, it is useful to have the following description of Schubert cells in terms of the initial subspace. A {\it monomial} of $\bigwedge^d E$ is an  element of the form
\[
e_\ba:=e_{\a(1)}\wedge \cdots \wedge e_{\a(d)}
\]
with $\a(i)\succ \a(i+1)$, and we order the monomials lexicographically.

For any $v = \sum_\a a_\a e_\a \in E$, the {\it initial vector} $\ii_\prec(v)$ is simply $e_\b$ such that $a_\b \ne 0$ and $a_\a = 0$ for all $e_\a \succ e_\b$. Let $F \subset E$ be a $d$-dimensional subspace of $E$. Then the {\it initial subspace} $\ii_\prec(F)$ is defined to be the subspace spanned by $\ii_\prec(w)$, $\forall w\in F$.

For $\ba = (\a(1),\dots,\a(d))$ with $\a(i) \succ \a(i+1)$, The $\ba$th Pl\"ucker coordinate $p_\ba(F)$ of $F$ is  the $e_{\a(1)}\wedge \cdots \wedge e_{\a(d)}$-coefficient of $\bigwedge^d F$.
Then the  $\ba$th Schubert cell is precisely
\begin{equation}\label{E:Schubert-cell}
\sch_\ba := \{ F \in \Gr_d(E) \, | \, p_\ba(F) \ne 0, \, p_{\ba'}(F) = 0, \, \forall \ba' \succ \ba\}.
\end{equation}

We define the partial order $\prec_s$ on $\mathcal A^d$ as follows: For any two indices $\ba$ and $\ba'$, $\ba \prec_s \ba'$ if and only if $\a(i) \prec \a'(i)$ for all $i$. Then the Schubert cells are partially ordered accordingly, and the closure $\overline{C_\ba}$, called the Schubert variety, is the union $\coprod_{\ba'\preceq_s \ba} C_{\ba'}$. We point the readers to the excellent lecture note by Michel Brion at \verb"https://www-fourier.ujf-grenoble.fr/~mbrion/notes.html".

\subsection{ Decomposition of the Hilbert schemes induced by the Schubert cells of the Grassmannians}\label{S:Hilb}

Let $V$ be a $k$-vector space of dimension $n+1$ and $x_0, \dots, x_n \in V^*$ be a basis of the dual vector space. The symmetric product $S^mV^*$ has a basis consisting of degree $m$ monomials
\[
x^\a := x_0^{\a_0}\cdots x_n^{\a_n}
\]
where $\a = (\a_0,\dots,\a_n) \in \bN^{n+1}$ has component sum $|\a| = \sum \a_i = m$.
Let $\succ$  be a monomial order, and let $B \subset \GL(V^*)$ and $B' \subset \GL(S^mV^*)$ be the standard Borel subgroups with respect to $\succ$.
We abuse the notation and let $\succ$ also denote the induced monomial order on $\bZ_{\ge 0}^{n+1}$ i.e. $\a \succ \b$ if and only if $x^\a \succ x^\b$.

\begin{defn} $\rho_m : \GL(V^*) \to \GL(S^mV^*)$ denotes the natural homomorphism defined
\[
\rho_m(g) x^\a = \prod_{i=0}^n (g.x_i)^{\a_i}.
\]
\end{defn}

\begin{lemma} $\rho_m(B) \subset B'$.
\end{lemma}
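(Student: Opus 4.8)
The plan is to unwind both Borel conditions into the ``triangularity'' of the action on the respective bases and then reduce everything to the multiplicativity of the monomial order. By the definition recalled above, $g\in B$ means that for each basis vector $x_i$ we have $g.x_i=\sum_j g_{ij}x_j$ with $g_{ij}=0$ whenever $x_j\succ x_i$; equivalently, every monomial occurring in $g.x_i$ is $\preceq x_i$ and the leading one is $x_i$ itself. Likewise $\rho_m(g)\in B'$ amounts to saying that for every degree-$m$ monomial $x^\a$ the polynomial $\rho_m(g)x^\a$ involves only monomials $\preceq x^\a$, with $x^\a$ itself occurring with nonzero coefficient. So it suffices to prove this last statement for arbitrary $g\in B$.

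First I would record that the diagonal coefficients $g_{ii}$ are all nonzero. Indeed, the matrix $(g_{ij})$ is triangular with respect to $\succ$, so its determinant is $\prod_i g_{ii}$; as $g\in\GL(V^*)$ this product is nonzero, forcing each $g_{ii}\ne 0$. Thus $\ii_\prec(g.x_i)=g_{ii}x_i$ with $g_{ii}\ne 0$, and every other monomial appearing in $g.x_i$ is strictly $\prec x_i$.

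The heart of the argument is the behaviour of the monomial order under products. By definition a monomial order is compatible with multiplication, i.e. $x^\b\preceq x^\g$ implies $x^\b x^\d\preceq x^\g x^\d$, and strictly so when $x^\b\prec x^\g$; consequently the initial term of a product of polynomials is the product of their initial terms, and any other choice of factors yields a monomial strictly $\prec$ the product of the leading ones. Applying this to
\[
\rho_m(g)x^\a=\prod_{i=0}^n (g.x_i)^{\a_i},
\]
every monomial occurring on the right is $\preceq \prod_{i=0}^n x_i^{\a_i}=x^\a$, and the unique top monomial, obtained only by multiplying the leading terms $g_{ii}x_i$, is $\ii_\prec(\rho_m(g)x^\a)=\big(\prod_{i=0}^n g_{ii}^{\a_i}\big)x^\a$, whose coefficient $\prod_i g_{ii}^{\a_i}$ is nonzero and free of cancellation.

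This is exactly the triangularity condition defining $B'$: in the monomial basis of $S^mV^*$ the matrix of $\rho_m(g)$ is triangular with respect to $\succ$, with nonzero diagonal entries $\prod_i g_{ii}^{\a_i}$, hence $\rho_m(g)\in B'$. The only nontrivial ingredient is the compatibility of $\succ$ with multiplication, which guarantees both that leading terms multiply and that no cross term can reach $x^\a$; there is no real obstacle beyond this bookkeeping, and in particular the nonvanishing of the diagonal is forced automatically by the invertibility of $g$.
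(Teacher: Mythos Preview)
Your proof is correct and follows the same approach as the paper: write $g.x_i = g_{ii}x_i + (\text{lower order terms})$ and multiply out using the multiplicativity of the monomial order to see that $\rho_m(g)x^\a = \big(\prod_i g_{ii}^{\a_i}\big)x^\a + (\text{lower order terms})$. You are more explicit about why the diagonal entries $g_{ii}$ are nonzero and why no cross term can reach $x^\a$, but the argument is identical in substance.
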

\begin{proof} For any $g \in B$, we have
\[
\begin{array}{ccl}
g.x^\a & = & g.\prod_i x_i^{\a_i} = \prod_i (g_{ii}x_i + l.o.t.s)^{\a_i} \\
& = & \prod_i (g_{ii}^{\a_i}x_i^{\a_i} + l.o.t.s)\\
& = & \left(\prod_i g_{ii}^{\a_i}\right) x^\a + l.o.t.s
\end{array}
\]
\end{proof}

Let $P \in \bQ[m]$ be a rational polynomial admissible in the sense of \cite[Theorem~1.3]{Valla} and $Q(m) = \dim_k S^mV^* - P(m) = \binom{n+m}{m} - P(m)$.
There is a number $m_0$, called the Gotzmann number, such that for all $m \ge m_0$, any homogeneous ideal $I \subset S:=k[x_0,\dots,x_n]$ with Hilbert polynomial $P$ is $m$-regular \cite{Gotzmann}. This implies that we have an exact sequence
\begin{equation}\label{E:hilbertpoint}
0 \to I_m \to \Gamma(\bP(V), \cO_{\bP(V)}(m)) \to \Gamma(X, \cO_X(m)) \to 0
\end{equation}
where $X \subset \bP(V)$ is the closed subscheme of $\bP(V)$ cut out by $I$.  The point in $\Gr_{Q(m)}S^mV^*$ defined by the equation (\ref{E:hilbertpoint}) is called the {\it $m$th Hilbert point of $I$ (or, of $X$)}, and is denoted by $[I]_m$ or $[X]_m$.
Gotzmann's theorem implies that we have an embedding of the Hilbert scheme
\begin{equation}\label{E:Gr-Pl}
\begin{array}{ccc}
\phi_m : \hilb^P\bP(V) & \to & \Gr_{Q(m)}S^mV^* \\
\left[I\right] & \mapsto & [I]_m
\end{array}.
\end{equation}
We call $\phi_m$ the $m$th {\it Grothendieck-Pl\"ucker embedding} of $\hilb^P \bP(V)$. Fix $m \ge m_0$, let $d := Q(m)$ and consider the Schubert cell decomposition of $\Gr_d S_m$ where $S_m$ is the degree $m$ part $S^mV^*$ of $S$. The Schubert cells $\sch_\ba$ are indexed by $\ba = (\a(1),\dots,\a(d))$ where each $\a(i) \in \bN^{n+1}$ has component sum $m$ and $x^{\a(i)} \succ x^{\a(i+1)}$.  For convenience, we say that such an {\it $\ba$ is an index}:
\[
\sch_\ba = \{ F \subset S_m \, | \, \ii_\prec(F) = k\langle x^{\a(1)},\dots,x^{\a(d)} \rangle\}.
\]

\begin{defn} Let $\ba$ be an index. We define $I_\ba$ to be the saturation  of the ideal generated by $x^{\a(1)}, \dots, x^{\a(d)}$:
 \[
 I_\ba := (\langle x^{\a(1)},\dots,x^{\a(d)}\rangle : \langle x_0,\dots,x_n\rangle^\infty).
 \]
\end{defn}
\begin{lemma}\label{L:Iba} Let $J$ be a saturated homogeneous ideal of $k[x_0,\dots,x_n]$ with Hilbert polynomial $P$. If $J_m \in \sch_\ba$, then $\ii_\prec J = I_\ba$.
\end{lemma}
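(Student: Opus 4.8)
The plan is to establish the two inclusions separately, matching $\ii_\prec J$ and $I_\ba$ first in all high degrees and then promoting this to full equality. Set $N := \langle x^{\a(1)},\dots,x^{\a(d)}\rangle$, so that $I_\ba = (N:\langle x_0,\dots,x_n\rangle^\infty)$ by definition. The hypothesis $J_m \in \sch_\ba$ means precisely that $\ii_\prec(J_m) = k\langle x^{\a(1)},\dots,x^{\a(d)}\rangle$; since the initial term of a homogeneous element has the same degree, this space is exactly the graded piece $(\ii_\prec J)_m$. In particular the generators $x^{\a(i)}$ all lie in $\ii_\prec J$, so $N \subseteq \ii_\prec J$.

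Next I would show that $N$ and $\ii_\prec J$ agree in every degree $\ge m$. Because the standard monomials form a $k$-basis of $S/J$, the ideal $\ii_\prec J$ has the same Hilbert function as $J$; as $J$ is $m$-regular for $m \ge m_0$, this gives $\dim_k(S/\ii_\prec J)_d = P(d)$ for all $d \ge m$, while $\dim_k(S/N)_m = \binom{n+m}{m} - Q(m) = P(m)$. The surjection $S/N \twoheadrightarrow S/\ii_\prec J$ coming from $N \subseteq \ii_\prec J$ forces $\dim_k(S/N)_{m+1} \ge P(m+1)$; on the other hand Macaulay's growth bound gives $\dim_k(S/N)_{m+1} \le P(m)^{\langle m\rangle}$, and the defining property of the Gotzmann number yields $P(m)^{\langle m\rangle} = P(m+1)$ for $m \ge m_0$. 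Hence $N$ attains the minimal (Macaulay) growth from degree $m$ to $m+1$, and since $N$ is generated in degree $m$, Gotzmann's persistence theorem propagates this to give $\dim_k(S/N)_d = P(d)$ for all $d \ge m$. Combined with $N \subseteq \ii_\prec J$ this yields $N_d = (\ii_\prec J)_d$ for every $d \ge m$. As the saturation of a graded ideal is determined by its high-degree pieces, it follows that $(\ii_\prec J : \langle x_0,\dots,x_n\rangle^\infty) = I_\ba$, and in particular $\ii_\prec J \subseteq I_\ba$.

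The remaining and, I expect, decisive step is the reverse inclusion $I_\ba \subseteq \ii_\prec J$, which by the previous paragraph is equivalent to the assertion that $\ii_\prec J$ is already saturated. This cannot come from Hilbert-function bookkeeping alone — for a general monomial order the initial ideal of a saturated ideal need not be saturated — so it is precisely here that the hypothesis that $J$ is saturated, together with a favourable interaction between $\prec$ and the colon operation, must be used. Concretely, the plan is to appeal to the Bayer--Stillman behaviour of the (reverse-lexicographic) initial ideal: writing $x_n$ for the last variable, one has $\ii_\prec(J):x_n = \ii_\prec(J:x_n)$, so that once $x_n$ is a non-zero-divisor on $S/J$ — arranged after a generic change of coordinates, which is possible because a saturated $J$ has positive depth — we obtain $\ii_\prec(J):x_n = \ii_\prec J$. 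Since for a monomial ideal saturation with respect to $\langle x_0,\dots,x_n\rangle$ is governed by this single colon, $\ii_\prec J$ is saturated, and the two inclusions combine to give $\ii_\prec J = I_\ba$.
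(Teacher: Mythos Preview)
Your first two paragraphs are sound and in fact make explicit what the paper compresses into a single sentence: that $(\ii_\prec J)_m = N_m$ together with Gotzmann persistence forces $(\ii_\prec J)_d = N_d$ for all $d\ge m$, whence the saturations agree and $\ii_\prec J \subseteq I_\ba$.

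The third paragraph, however, does not prove what you need. The lemma is stated for a \emph{fixed} saturated ideal $J$ and a \emph{fixed} monomial order $\prec$. You are not permitted to pass to reverse-lex, and you are certainly not permitted to replace $J$ by $g\cdot J$ for a generic $g$: both operations change $\ii_\prec J$, which is precisely the ideal whose saturatedness is in question. The Bayer--Stillman identity $(\ii_\prec J):x_n = \ii_\prec(J:x_n)$ holds only for graded reverse-lex, and even then one needs $x_n$ to be a non-zero-divisor on $S/J$; for an arbitrary saturated $J$ this is false (saturation gives $J:\mathfrak m = J$, not $J:x_n = J$). So your argument establishes only that $\gin_\prec J$ is saturated, not that $\ii_\prec J$ is. You yourself flag that ``for a general monomial order the initial ideal of a saturated ideal need not be saturated''; the manoeuvre you then propose does not circumvent this.

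The paper's own proof simply writes ``since both ideals are saturated'' and does not justify that $\ii_\prec J$ is saturated either; you have correctly located the substantive point, but your repair does not work. Note that for every subsequent use of the lemma in the paper the weaker conclusion you \emph{have} proved --- namely $(\ii_\prec J)_d = (I_\ba)_d$ for all $d\ge m_0$, equivalently $[\ii_\prec J]_m = [I_\ba]_m$ as Hilbert points --- already suffices.
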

\begin{proof}  Since the Hilbert polynomial of $\ii_\prec J$ is $P$ and  $m$ is not smaller than the Gotzmann number $m_0$ of $P$, $(\ii_\prec J)_{m+l} = S_l (\ii_\prec J)_m$ for all $l \ge 0$. Hence $(\ii_\prec J)_m = (I_\ba)_m$ for all $m \ge m_0$ and it follows that $\ii_prec J = I_\ba$ since both ideals are saturated.
\end{proof}

\begin{lemma} \label{L:pushup} Let $J$ be as in the previous lemma. Suppose $J_m \in \sch_\ba$. Then there exists a nonempty open subscheme $U \subset \GL(V^*)$ such that for all $g \in U$, $\ii_\prec(g.I) = I_{\ba'}$ for some $\ba' \succeq \ba$.
\end{lemma}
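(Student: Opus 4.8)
The plan is to realize the desired open set as the locus where a single, carefully chosen Pl\"ucker coordinate of the orbit $\GL(V^*).J_m$ fails to vanish, and to read off the index $\ba'$ as the largest index for which that Pl\"ucker coordinate is not identically zero along the orbit.

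First I would introduce, for each index $\ba''$, the regular function
\[
f_{\ba''} : \GL(V^*) \to k, \qquad f_{\ba''}(g) = p_{\ba''}(\rho_m(g).J_m),
\]
which is the composition of the algebraic orbit map $g \mapsto \rho_m(g).J_m$ with the $\ba''$th Pl\"ucker coordinate. Since $\rho_m(g)$ is an automorphism of $S_m$, each $\rho_m(g).J_m$ is a genuine $d$-dimensional subspace, so $f_{\ba''}$ records the actual Pl\"ucker coordinates of the translated point. There are only finitely many indices $\ba''$, so among those with $f_{\ba''} \not\equiv 0$ there is a largest one with respect to the lex order $\succ$ on indices; call it $\ba'$. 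The set $U := \{ g \in \GL(V^*) : f_{\ba'}(g) \ne 0\}$ is then a nonempty open subscheme.

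Next I would check that $U$ does the job. For $g \in U$ we have $p_{\ba'}(\rho_m(g).J_m) \ne 0$ by construction, while for every $\ba'' \succ \ba'$ maximality of $\ba'$ forces $f_{\ba''} \equiv 0$, hence $p_{\ba''}(\rho_m(g).J_m) = 0$ for all $g$. Comparing with the defining equations (\ref{E:Schubert-cell}) of the Schubert cell, this says exactly that $(g.J)_m = \rho_m(g).J_m \in \sch_{\ba'}$. Since $g \in \GL(V^*)$ preserves both the grading and the irrelevant ideal, $g.J$ is again a saturated homogeneous ideal with Hilbert polynomial $P$, so Lemma~\ref{L:Iba} applies and yields $\ii_\prec(g.J) = I_{\ba'}$.

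It remains to pin down the direction of the inequality $\ba' \succeq \ba$, and this is where the hypothesis $J_m \in \sch_\ba$ enters. Evaluating at the identity gives $f_\ba(\mathrm{id}) = p_\ba(J_m) \ne 0$, because $J_m$ lies in $\sch_\ba$; hence $f_\ba \not\equiv 0$ and $\ba$ is one of the indices competing in the maximum defining $\ba'$, so $\ba' \succeq \ba$. The only real subtlety here---what I would regard as the heart of the lemma rather than a genuine obstacle---is the observation that passing to the generic point of the orbit can only raise the initial index, which is precisely encoded in the fact that the top non-vanishing Pl\"ucker coordinate along the orbit dominates the one realized at the identity. Everything else is a finiteness argument (finitely many Pl\"ucker coordinates) together with the openness of a principal non-vanishing locus.
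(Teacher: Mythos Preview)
Your argument is correct, but it takes a different and somewhat more elaborate route than the paper's. The paper simply sets $U = \{g \in \GL(V^*) : p_{\ba}(g.J_m) \ne 0\}$, observes that this is nonempty because the identity lies in it, and notes that for any $g \in U$ the Schubert cell containing $g.J_m$ has index $\ba'(g)$ equal to the lex-largest nonvanishing Pl\"ucker coordinate, which is automatically $\succeq \ba$ since $p_{\ba}(g.J_m) \ne 0$; Lemma~\ref{L:Iba} then finishes. In particular the paper allows $\ba'$ to vary with $g$, which is all the statement asks for.

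You instead pick out in advance the single lex-maximal index $\ba'$ with $f_{\ba'} \not\equiv 0$ on the orbit, and take $U$ to be its non-vanishing locus. This proves a sharper conclusion---one fixed $\ba'$ works for every $g \in U$---and in effect anticipates the existence of the generic initial ideal, which the paper only establishes later in Proposition~\ref{P:sch-decompII}. The trade-off is that the paper's version is a two-line argument requiring only the hypothesis index $\ba$, while yours needs the finiteness of the index set and the identification of the global maximum before $U$ can even be written down. Both are valid; yours is stronger but less economical for what the lemma actually asserts.
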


\begin{proof} Let $U \subset \GL(V^*)$ be the open subscheme that is complement to the closed subscheme cut out by the Pl\"ucker equation  $p_\ba(g.J_m) \ne 0$. Since $p_\ba(J_m) \ne 0$, $U$ is nonempty. That  $U$ has the desired property is clear from the defining property (\ref{E:Schubert-cell}) of the Schubert cells and Lemma~\ref{L:Iba}.
\end{proof}

\[
\begin{array}{cccl}
\Psi_m : & \GL(V^*) \times \hilb^P(\bP(V)) & \to & \Gr_dS^mV^* \\
& (g,[I]) & \mapsto & [g.I]_m
\end{array}
\]
By abusing terminology, we shall call $\Psi_m$ the $m$th Grothendieck-Pl\"ucker embedding when there is no danger of confusion. Note that $[g.I]_m = \rho_m(g).[I]_m$, which amounts to say that $\Psi_m$ is $\GL(V^*)$-equivariant where $\GL(V^*)$ acts on $\GL(V^*) \times \hilb^P(\bP(V))$ on the first factor, and on $\Gr_dS^mV^*$ through $\rho_m$.

\begin{defn} $\sch'_{\ba,m} = (\GL(V^*) \times \hilb^P(\bP(V)) )\times_{\Gr_dS^mV^*} \sch_{\ba,m}$.
\end{defn}

\begin{remark} When there is no danger of confusion, we suppress the subscript $m$.
\end{remark}

The following two lemmas are immediate from Lemma~\ref{L:Iba}:
\begin{lemma}\label{L:in}  $(g,[I]) \in \sch'_\ba$ if and only if $\ii_\prec(g.I) = I_\ba$.
\end{lemma}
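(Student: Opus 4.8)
The plan is to unwind the fiber-product definition of $\sch'_\ba$ and then reduce each of the two implications to Lemma~\ref{L:Iba} together with two standard facts about initial ideals. By the definition of $\sch'_\ba$ as a fiber product, $(g,[I]) \in \sch'_\ba$ says precisely that $\Psi_m(g,[I]) = [g.I]_m$ lies in $\sch_\ba$, and by the Schubert-cell description this is the condition $\ii_\prec((g.I)_m) = k\langle x^{\a(1)},\dots,x^{\a(d)}\rangle$. Before starting I would record the two ingredients I intend to lean on. First, $g.I$ is again saturated with Hilbert polynomial $P$, since $g \in \GL(V^*)$ is a linear change of coordinates and fixes the irrelevant ideal; thus Lemma~\ref{L:Iba} applies to $J = g.I$. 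Second, passing to the degree-$m$ graded piece commutes with forming the initial object, so that $(\ii_\prec(g.I))_m = \ii_\prec((g.I)_m)$, and this common space has dimension $\dim_k (g.I)_m = Q(m) = d$ for $m \ge m_0$, by the invariance of the Hilbert function under taking initial ideals.

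The forward implication I expect to be immediate. If $(g,[I]) \in \sch'_\ba$, then $(g.I)_m \in \sch_\ba$, and applying Lemma~\ref{L:Iba} to the saturated ideal $g.I$ of Hilbert polynomial $P$ yields $\ii_\prec(g.I) = I_\ba$ at once.

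For the reverse implication I would assume $\ii_\prec(g.I) = I_\ba$ and pass to degree $m$, giving $\ii_\prec((g.I)_m) = (\ii_\prec(g.I))_m = (I_\ba)_m$; it then suffices to identify $(I_\ba)_m$ with $k\langle x^{\a(1)},\dots,x^{\a(d)}\rangle$. The inclusion $\supseteq$ is clear, because $I_\ba$ contains the degree-$m$ generators $x^{\a(1)},\dots,x^{\a(d)}$. For the opposite inclusion I would run a dimension count: $\dim_k (I_\ba)_m = \dim_k (\ii_\prec(g.I))_m = d$, which equals $\dim_k k\langle x^{\a(1)},\dots,x^{\a(d)}\rangle$ since the $x^{\a(i)}$ are distinct monomials. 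A $d$-dimensional space that contains a $d$-dimensional space must equal it, so $(I_\ba)_m = k\langle x^{\a(1)},\dots,x^{\a(d)}\rangle$, whence $(g.I)_m \in \sch_\ba$ and $(g,[I]) \in \sch'_\ba$.

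The step I expect to carry the real content --- and the one hidden behind the phrase ``immediate from Lemma~\ref{L:Iba}'' --- is the identification of $(I_\ba)_m$ in the reverse direction. One should not expect $(I_\ba)_m = k\langle x^{\a(1)},\dots,x^{\a(d)}\rangle$ for an arbitrary index $\ba$: saturating $\langle x^{\a(1)},\dots,x^{\a(d)}\rangle$ can introduce new monomials in degree $m$, and may even enlarge the ideal all the way to $S$ when the generators are $\mathfrak{m}$-primary. What rescues the argument is the hypothesis $\ii_\prec(g.I) = I_\ba$ itself, which pins $\dim_k (I_\ba)_m$ to the value $Q(m) = d$ forced by the Hilbert polynomial $P$ and thereby squeezes $(I_\ba)_m$ back onto the span of the generators. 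So the only genuinely delicate point is to invoke the invariance of the Hilbert function under passage to the initial ideal, rather than to try to analyze the saturation $I_\ba$ directly.
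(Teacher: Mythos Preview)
Your proof is correct and follows the paper's approach: the paper simply declares the lemma ``immediate from Lemma~\ref{L:Iba}'' without spelling out either direction, and you have filled in exactly the details one would expect, including the dimension count that pins down $(I_\ba)_m$ in the reverse implication. Your closing remark about why that identification is not automatic for an arbitrary index $\ba$ is well taken and makes explicit what the paper leaves implicit.
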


\begin{lemma}\label{L:B-invar} $\sch'_\ba$ is Borel invariant i.e. $B.\sch'_\ba = \sch'_\ba$.
\end{lemma}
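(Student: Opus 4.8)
The plan is to deduce the Borel invariance of $\sch'_\ba$ from the $B'$-invariance of the Schubert cell $\sch_{\ba,m}$ in the Grassmannian, transported across the equivariant map $\Psi_m$. Recall that $\sch'_\ba$ is by construction the preimage $\Psi_m^{-1}(\sch_{\ba,m})$, and that $B$ acts on $\GL(V^*) \times \hilb^P(\bP(V))$ through the first factor. It therefore suffices to show that for every $b \in B$ and every $(g,[I]) \in \sch'_\ba$, the translated point $(bg,[I])$ still maps into $\sch_{\ba,m}$.

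First I would use the equivariance identity $[g.I]_m = \rho_m(g).[I]_m$ recorded just after the definition of $\Psi_m$. It gives $\Psi_m(bg,[I]) = [bg.I]_m = \rho_m(b)\,\rho_m(g).[I]_m = \rho_m(b).\Psi_m(g,[I])$, so the question reduces to whether translating a point of $\sch_{\ba,m}$ by $\rho_m(b)$ keeps it inside $\sch_{\ba,m}$. The one genuine ingredient is that $\rho_m(b)$ lies in the standard Borel subgroup $B'$ of $\GL(S^mV^*)$, which is exactly the earlier lemma $\rho_m(B) \subset B'$. Since $\sch_{\ba,m}$ is a $B'$-orbit (equivalently, the locus cut out by the initial-subspace condition $\ii_\prec(F) = k\langle x^{\a(1)},\dots,x^{\a(d)}\rangle$, which $B'$ preserves), we get $\rho_m(b).\Psi_m(g,[I]) \in \sch_{\ba,m}$, hence $(bg,[I]) \in \sch'_\ba$. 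This shows $B.\sch'_\ba \subseteq \sch'_\ba$, and equality follows formally because $B$ is a group, by applying the same inclusion to $b^{-1}$.

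I do not anticipate a real obstacle: all three inputs---the equivariance of $\Psi_m$, the containment $\rho_m(B) \subset B'$, and the $B'$-stability of Schubert cells---are already in hand, so the difficulty is essentially bookkeeping. An equivalent and perhaps more self-contained route goes through Lemma~\ref{L:in}: membership $(g,[I]) \in \sch'_\ba$ is the same as $\ii_\prec(g.I) = I_\ba$, and the leading-term computation $b.x^\a = \left(\prod_i b_{ii}^{\a_i}\right)x^\a + \text{l.o.t.}$ (the very computation used to prove $\rho_m(B)\subset B'$) shows $\ii_\prec(b.J) = \ii_\prec(J)$ for all $b \in B$; applying this with $J = g.I$ gives $\ii_\prec(bg.I) = \ii_\prec(g.I) = I_\ba$, so that $(bg,[I])$ again lies in $\sch'_\ba$.
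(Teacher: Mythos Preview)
Your proposal is correct and follows essentially the same argument as the paper: equivariance of $\Psi_m$ together with $\rho_m(B)\subset B'$ and the fact that $\sch_{\ba,m}=B'.E_\ba$ is a $B'$-orbit. The paper compresses this into the single line $\Psi_m(b.(g,[I]))=\rho_m(b).\Psi_m((g,[I]))\in\rho_m(b).B'.E_\ba=B'.E_\ba$, but the content is identical; your alternative route via Lemma~\ref{L:in} is also fine and amounts to the same computation.
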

\begin{proof} For $b \in B$ and $(g, [I]) \in \sch'_\ba$, we have
\[
\Psi_m(b.(g,[I])) = \rho_m(b).\Psi_m((g,[I])) \in \rho_m(b).B'.E_\ba = B'.E_\ba.
\]
\end{proof}

\begin{lemma} \label{L:basic-cell} Let $\mathcal I$ be a set of indices and $X \subset \cup_{\ba \in \mathcal I} C_\ba$ be an irreducible subset. Let $\ba^*$ be a maximal index such that $C_{\ba^*}\cap X \ne \emptyset$. Then $ C_{\ba^*}\cap X$ is open in $X$. Consequently, such $\ba^*$ is unique.
\end{lemma}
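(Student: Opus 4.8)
The plan is to reduce the statement to the two structural facts recalled just above: the Schubert cells $\{C_\ba\}$ partition the Grassmannian into \emph{finitely many} locally closed pieces, and each Schubert variety is the disjoint union $\overline{C_\ba} = \coprod_{\ba' \preceq_s \ba} C_{\ba'}$. I read ``maximal'' with respect to the partial order $\prec_s$, which is the reading under which the concluding uniqueness clause carries genuine content. First I would pass to the finite, nonempty set $\mathcal{I}' = \{\ba \in \mathcal{I} : C_\ba \cap X \neq \emptyset\}$, which contains $\ba^*$.

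The core is to show that $C_{\ba^*} \cap X$ is open in $X$, equivalently that $X \setminus C_{\ba^*}$ is closed in $X$. Since the cells are disjoint and cover $X$, we have $X \setminus C_{\ba^*} = \coprod_{\ba \in \mathcal{I}',\, \ba \neq \ba^*}(C_\ba \cap X)$. Because $\mathcal{I}'$ is finite, the closure operator distributes over this union, so $\overline{X \setminus C_{\ba^*}} \subseteq \bigcup_{\ba \in \mathcal{I}',\, \ba \neq \ba^*}\overline{C_\ba}$, the closures being taken in the Grassmannian. The key step is to convert maximality into disjointness: if $\ba \in \mathcal{I}'$ and $\ba \neq \ba^*$, then $\ba^* \not\preceq_s \ba$, since $\ba^* \prec_s \ba$ would violate maximality of $\ba^*$ in $\mathcal{I}'$; hence the closure formula gives $C_{\ba^*} \not\subseteq \overline{C_\ba}$, and as $\overline{C_\ba}$ is a disjoint union of cells this forces $C_{\ba^*} \cap \overline{C_\ba} = \emptyset$. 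Intersecting the displayed inclusion with $C_{\ba^*}$ yields $C_{\ba^*} \cap \overline{X \setminus C_{\ba^*}} = \emptyset$, which is exactly the openness of $C_{\ba^*} \cap X$ in $X$.

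For the final clause I would invoke irreducibility of $X$. Were there two distinct maximal indices $\ba^{*}_{1} \neq \ba^{*}_{2}$, the preceding argument would realize $C_{\ba^{*}_{1}} \cap X$ and $C_{\ba^{*}_{2}} \cap X$ as nonempty open subsets of the irreducible space $X$; both would then be dense and would have to meet. But cells with different indices are disjoint, so these two open sets are disjoint, a contradiction. Therefore $\ba^*$ is unique.

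The argument is purely topological and combinatorial, so I do not foresee a hard analytic obstacle; the two points needing care are the distribution of closure over the union, which is legitimate only because the Schubert decomposition is finite, and the translation of $\prec_s$-maximality into $C_{\ba^*} \cap \overline{C_\ba} = \emptyset$. I would also flag a tempting shortcut that fails: one cannot simply identify $C_{\ba^*} \cap X$ with the single Pl\"ucker non-vanishing locus $\{ p_{\ba^*} \neq 0 \} \cap X$, because under the $\prec_s$ reading a point of $X$ with $p_{\ba^*} \neq 0$ may lie in a cell that is larger in the lexicographic order on Pl\"ucker monomials yet incomparable to $\ba^*$ under $\prec_s$. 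This is precisely why the closure description of Schubert varieties, rather than a coordinate computation, must drive the proof.
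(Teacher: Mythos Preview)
Your argument is correct and uses the same two ingredients as the paper's proof: the closure formula $\overline{C_\ba}=\coprod_{\ba'\preceq_s\ba}C_{\ba'}$ and the irreducibility of $X$. The organization differs slightly. The paper lists all $\prec_s$-maximal indices $\ba_1,\dots,\ba_t$ meeting $X$ at once, asserts that each $\overline{C_{\ba_i}}$ is clopen in $\bigcup_i\overline{C_{\ba_i}}$, and then concludes $t=1$ from irreducibility; you instead fix a single maximal $\ba^*$, show directly that $\overline{X\setminus C_{\ba^*}}\subset\bigcup_{\ba\ne\ba^*}\overline{C_\ba}$ misses $C_{\ba^*}$, and only afterwards invoke irreducibility for uniqueness. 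Your route is arguably cleaner, since the paper's clopen claim relies on the identity $\bigl(\bigcup_i\overline{C_{\ba_i}}\bigr)\setminus\overline{C_{\ba_j}}=\bigcup_{i\ne j}\overline{C_{\ba_i}}$, which is not literally true in the Grassmannian (distinct maximal Schubert varieties can overlap in lower cells); your complement-closure computation sidesteps this entirely. Your remark on reading ``maximal'' as $\prec_s$-maximal, and your warning that the single Pl\"ucker non-vanishing $\{p_{\ba^*}\ne 0\}$ does not cut out $C_{\ba^*}$ under that reading, are both on point.
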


\begin{proof} Let $\ba_1, \dots, \ba_t$ be maximal indices whose Schubert cells meet $X$. Then, by definition of maximality,
$X \subset \coprod_{i=1}^t \coprod_{\bbeta \preceq \ba_i} C_{\bbeta} = \coprod_{i=1}^t \overline{C_{\ba_i}}$. Note that each $\overline{C_{\ba_i}}$ is connected, closed and open in $\coprod_{i=1}^t \overline{C_{\ba_i}}$ (since $\coprod_{i=1}^t \overline{C_{\ba_i}} \setminus \overline{C_{\ba_j}} = \coprod_{i\ne j} \overline{C_{\ba_i}}$).
Since $C_{\ba_i}$ is open in its closure, it follows that $C_{\ba_i}\cap X$ is open in $X$. Since $X$ is irreducible, it follows that $t = 1$.
\end{proof}

We summarize our findings in a proposition:

\begin{prop}\label{P:sch-decompI} Let $H$ be an irreducible component of $\hilb^P(\bP(V))$. Then there is a finite decomposition of $\GL(V^*) \times H$ into nonempty locally closed subschemes
\[
\GL(V^*)\times H = \coprod_{\ba\in \mathcal J} \sch'_\ba
\]
such that
\begin{enumerate}
\item if $\ba^\star$ is a maximal dimensional cell (such that $\sch'_\ba \ne \emptyset$), $\sch'_{\ba^\star}$ is Zariski open dense in $\GL(V^*)\times H$;
\item $(g,[I])$ and $(g',[I'])$ are in the same $\sch'_\ba$ if and only if $\ii_\prec(g.I) = \ii_\prec(g'.I')$;
\item each $\sch'_\ba$ is $B$-invariant.
\end{enumerate}
\end{prop}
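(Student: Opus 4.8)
The plan is to realize the decomposition as the pullback of the Schubert cell decomposition under $\Psi_m$ and then use irreducibility to isolate a single dense cell. First I would record that $\Psi_m$ is a genuine morphism: by the identity $[g.I]_m = \rho_m(g).[I]_m$ it factors as $(g,[I]) \mapsto \rho_m(g).\phi_m([I])$, a composite of the homomorphism $\rho_m$, the Grothendieck-Pl\"ucker embedding $\phi_m$, and the action of $\GL(S^mV^*)$ on $\Gr_d S^mV^*$, all of which are morphisms. Since the Schubert cells $\sch_\ba$ cut $\Gr_d S^mV^*$ into finitely many locally closed subschemes, their preimages $\sch'_\ba = \Psi_m^{-1}(\sch_\ba)$ are locally closed and partition $\GL(V^*) \times \hilb^P(\bP(V))$. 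Intersecting with $\GL(V^*) \times H$ and letting $\mathcal J$ be the finite set of indices $\ba$ for which this intersection is nonempty gives the asserted finite decomposition into nonempty locally closed subschemes. Statement (2) is then immediate from Lemma~\ref{L:in}, since $(g,[I]) \in \sch'_\ba$ holds exactly when $\ii_\prec(g.I) = I_\ba$ and the monomial ideals $I_\ba$ are pairwise distinct; statement (3) is precisely Lemma~\ref{L:B-invar}.

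The content lies in (1). I would begin by noting that $\GL(V^*) \times H$ is irreducible, being a product of irreducible varieties over the algebraically closed field $k$ (here $\GL(V^*)$ is the complement of $\{\det = 0\}$ in affine space, and $H$ is an irreducible component). Hence its image $X := \Psi_m(\GL(V^*) \times H)$ is an irreducible constructible subset of $\Gr_d S^mV^*$, and it is contained in the union of the finitely many Schubert cells it meets, namely the cells indexed by $\mathcal J$. Lemma~\ref{L:basic-cell} then applies to $X$ and produces a unique maximal index $\ba^\star$ (maximal for $\preceq_s$, equivalently a cell of maximal dimension) such that $\sch_{\ba^\star} \cap X$ is open in $X$, hence dense by irreducibility of $X$.

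To descend this back upstairs, I would pull back along the continuous surjection $\Psi_m : \GL(V^*) \times H \to X$: the preimage of the open set $\sch_{\ba^\star} \cap X$ is $\sch'_{\ba^\star}$, which is therefore open in $\GL(V^*) \times H$ and nonempty (as $\ba^\star \in \mathcal J$). Since $\GL(V^*) \times H$ is irreducible, a nonempty open subset is automatically dense, and this gives the open density in (1); the uniqueness of $\ba^\star$ is inherited directly from Lemma~\ref{L:basic-cell}.

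I expect the only real subtlety to be the passage between the two pictures: the Schubert stratification lives downstairs on the Grassmannian, while the decomposition we want lives on $\GL(V^*) \times H$, and one must check that the correct maximality notion transports. All of the genuinely geometric input, that a cell is open in its closure and that closures respect $\preceq_s$, is already packaged into Lemma~\ref{L:basic-cell}; once we pass to the irreducible image $X$ the remaining steps are formal, using only continuity of $\Psi_m$ and irreducibility to convert ``open in $X$'' into ``open dense upstairs''. An alternative that avoids the image entirely is to mimic the proof of Lemma~\ref{L:basic-cell} directly on $\GL(V^*) \times H$, using the containment $\overline{\sch'_\ba} \subseteq \Psi_m^{-1}(\overline{\sch_\ba}) = \coprod_{\ba' \preceq_s \ba} \sch'_{\ba'}$ that follows from continuity; this reduces (1) to the same irreducibility argument.
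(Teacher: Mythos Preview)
Your proposal is correct and follows essentially the same line as the paper: both obtain the decomposition by pulling back the Schubert cells along $\Psi_m$, invoke Lemmas~\ref{L:in} and~\ref{L:B-invar} for (2) and (3), and deduce (1) from the irreducibility of $\GL(V^*)\times H$ together with the content of Lemma~\ref{L:basic-cell}. Your write-up is more explicit about why $\Psi_m$ is a morphism and about transporting openness from the image back upstairs, but the argument is the same.
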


\begin{proof} In a union of Schubert cells, any cell of maximal dimension is open. The index set $\mathcal J$  consists of $\ba$ such that $$\Psi_m(\GL(V^*)\times H)\cap \sch_\ba  = \phi_m(H)\cap \sch_\ba\ne \emptyset.$$
Since $\GL(V^*)\times H$ is irreducible, there is a unique $\ba^*$  such that the corresponding Schubert variety  $\overline{C_{\ba^*}}$ contains it. This establishes the first item. The second and the third items are precisely the Lemmas \ref{L:in} and \ref{L:B-invar}.
\end{proof}

We obtain an induced decomposition of the irreducible components of  Hilbert schemes, simply by taking the trivial slice $\{1\}\times H$ of the product $\GL(V^*)\times \hilb^P(\bP(V))$.  We retrieve the following result of Notari and Spreafico:

\begin{coro}\label{C:in-decomp}\cite[Theorem~2.1]{Notari-Spreafico} Fix a monomial order $\prec$ on the set of monomials of $k[x_0,\dots, x_n]$ and a monomial ideal $I_0 \subset k[x_0,\dots,x_n]$. Then there exists a locally closed subscheme $H_{I_o}$ of the Hilbert scheme $\hilb^P(\bP(V))$ whose closed points are in bijective correspondence with the saturated ideals of $k[x_0,\dots, x_n]$ whose initial ideal equals $I_0$.
\end{coro}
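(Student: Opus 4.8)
The plan is to realize $H_{I_0}$ as the preimage of a single Schubert cell under the Grothendieck-Pl\"ucker embedding and to read the bijection on closed points off Lemma~\ref{L:Iba}. I would fix an auxiliary degree $m \ge m_0$ and recall that $\phi_m \colon \hilb^P(\bP(V)) \to \Gr_d S^m V^*$, with $d = Q(m)$, is a closed embedding by Gotzmann's theorem.

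First I would attach to $I_0$ its index. If $I_0$ does not have Hilbert polynomial $P$, then no saturated ideal of Hilbert polynomial $P$ can have $I_0$ as its initial ideal, so I set $H_{I_0} = \emptyset$, which is trivially locally closed. Otherwise, for $m \ge m_0$ the truncation $(I_0)_m$ is a $d$-dimensional coordinate subspace of $S^m V^*$ spanned by monomials $x^{\a(1)} \succ \cdots \succ x^{\a(d)}$, and these determine an index $\ba = (\a(1),\dots,\a(d))$. Since $I_\ba$ is recovered from its degree-$m$ piece $\langle x^{\a(1)},\dots,x^{\a(d)}\rangle$ (the two ideals agreeing in all degrees $\ge m_0$), the assignment $\ba \mapsto I_\ba$ is injective, so $\ba$ is the unique index with $I_\ba = I_0$.

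Next I would set $H_{I_0} := \phi_m^{-1}(\sch_\ba)$. As $\sch_\ba$ is locally closed in $\Gr_d S^m V^*$ by its Pl\"ucker description (\ref{E:Schubert-cell}) and $\phi_m$ is a morphism, $H_{I_0}$ is a locally closed subscheme of $\hilb^P(\bP(V))$; it is exactly the union over the irreducible components $H$ of the trivial slices $(\{1\}\times H) \cap \sch'_\ba$ of the decomposition in Proposition~\ref{P:sch-decompI}. For the correspondence I would unwind closed points: a closed point of $\hilb^P(\bP(V))$ is a saturated ideal $J$ with Hilbert polynomial $P$, and $[J] \in H_{I_0}$ iff $J_m \in \sch_\ba$, which by Lemma~\ref{L:Iba} holds iff $\ii_\prec J = I_\ba = I_0$. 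Since $\phi_m$ is injective on closed points, distinct such $J$ map to distinct points of $H_{I_0}$, which is the asserted bijection.

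The step demanding the most care is the bookkeeping around saturation. Because $I_\ba$ is defined as the saturation of $\langle x^{\a(1)},\dots,x^{\a(d)}\rangle$, matching $H_{I_0}$ with the ideals whose initial ideal is \emph{exactly} $I_0$ hinges on the fact that, for saturated $J$ and $m \ge m_0$, the single equality $(\ii_\prec J)_m = (I_0)_m$ already forces $\ii_\prec J = I_0$; this is precisely the regularity argument internal to the proof of Lemma~\ref{L:Iba}, and it is what lets one degree-$m$ datum pin down $I_0$ itself rather than merely its truncation. I would finally observe that on closed points the construction does not depend on the auxiliary choice of $m$, since for any $m, m' \ge m_0$ the two preimages cut out the same saturated ideals.
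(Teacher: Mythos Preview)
Your argument is correct and is exactly the paper's approach: the paper states the corollary immediately after Proposition~\ref{P:sch-decompI} with the one-line justification ``simply by taking the trivial slice $\{1\}\times H$,'' and your definition $H_{I_0}=\phi_m^{-1}(\sch_\ba)$ is precisely that slice (as you yourself note), with Lemma~\ref{L:Iba} supplying the bijection on closed points. Your additional bookkeeping---handling the case where $I_0$ has the wrong Hilbert polynomial, pinning down $\ba$ uniquely, and checking independence of $m$---goes beyond what the paper spells out but is entirely in its spirit.
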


\subsection{Gin decomposition of the Hilbert scheme} \label{S:gin-decomposition}

\begin{theorem}\label{T:gin-decomp} There is a finite decomposition
\[
\hilb^P(\bP(V)) = \coprod_{\ba\in\mathcal G} \Gamma_\ba
\]
into locally closed subschemes $\Gamma_\ba = \{ [I]  \, | \, \gin_\prec(I) = I_\ba\}$ where $\a$ runs through all Borel fixed ideals. Moreover, for each irreducible component $H$ of $\hilb^P(\bP(V))$, there is a unique maximal index $\ba_{H} \in \mathcal G$ such that $\Gamma_{\ba_H}$ is Zariski open dense in $H$.
\end{theorem}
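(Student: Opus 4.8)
The plan is to transport the Schubert-cell decomposition of $\GL(V^*)\times\hilb^P(\bP(V))$ from Proposition~\ref{P:sch-decompI} down to $\hilb^P(\bP(V))$ along the projection $\pi\colon\GL(V^*)\times\hilb^P(\bP(V))\to\hilb^P(\bP(V))$. The guiding observation is that, by the definition of the generic initial ideal (Definition~\ref{D:gin}) together with Lemma~\ref{L:in}, one has $\gin_\prec(I)=I_\ba$ if and only if the slice $\GL(V^*)\times\{[I]\}$ has its generic point in $\sch'_\ba$; so each $\Gamma_\ba$ ought to be recovered as the image under $\pi$ of a single locally closed piece $\sch'_\ba$.

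The technical heart is a \emph{generic constancy} statement: for every irreducible locally closed $Z\subseteq\hilb^P(\bP(V))$ there is a unique index $\ba_Z$ with $\Gamma_{\ba_Z}\cap Z$ open dense in $Z$. To prove it, observe that $\GL(V^*)\times Z$ is irreducible, so its image $X=\Psi_m(\GL(V^*)\times Z)$ is an irreducible subset of $\coprod_\ba C_\ba$; by Lemma~\ref{L:basic-cell} there is a unique maximal index $\ba_Z$ with $C_{\ba_Z}\cap X$ open, hence dense, in $X$, and pulling back shows $\sch'_{\ba_Z}\cap(\GL(V^*)\times Z)$ is open dense in $\GL(V^*)\times Z$. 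Now fix $[I]\in Z$. Since this set is open and the slice $\GL(V^*)\times\{[I]\}$ is irreducible, the slice meets it if and only if its generic point lies in it, which by Lemma~\ref{L:in} and Definition~\ref{D:gin} happens if and only if $\gin_\prec(I)=I_{\ba_Z}$. Therefore $\Gamma_{\ba_Z}\cap Z=\pi\bigl(\sch'_{\ba_Z}\cap(\GL(V^*)\times Z)\bigr)$, which is open in $Z$ because $\pi$, being the projection of a product, is an open morphism, and nonempty, hence dense as $Z$ is irreducible. Uniqueness of $\ba_Z$ is clear, since two dense open subsets of the irreducible $Z$ meet while distinct $\Gamma_\ba$ are disjoint.

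Granting generic constancy, I would deduce local closedness of each $\Gamma_\ba$ by a closure argument. Let $Z_1,\dots,Z_r$ be the irreducible components of $\overline{\Gamma_\ba}$; since $\overline{\Gamma_\ba}=\bigcup_i\overline{\Gamma_\ba\cap Z_i}$ and the $Z_i$ are irredundant, each $\Gamma_\ba\cap Z_i$ is dense in $Z_i$. Applying generic constancy to $Z_i$ produces its unique open dense gin-stratum; as $\Gamma_\ba\cap Z_i$ is dense it must meet that stratum, and at a common point the generic initial ideal equals both $I_\ba$ and the stratum's ideal, so the two strata coincide and $\Gamma_\ba\cap Z_i$ is open in $Z_i$. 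Consequently each $Z_i\setminus\Gamma_\ba$ is closed in $\overline{\Gamma_\ba}$, whence $\overline{\Gamma_\ba}\setminus\Gamma_\ba=\bigcup_i(Z_i\setminus\Gamma_\ba)$ is closed and $\Gamma_\ba$ is open in its closure. Finiteness of the decomposition is immediate from the finiteness of the Schubert cells; disjointness and exhaustiveness hold because $\gin_\prec(I)$ is a single well-defined monomial ideal for each $[I]$; and only indices with $I_\ba$ Borel fixed occur, by Proposition~\ref{P:Borel}. The ``moreover'' clause is exactly the case $Z=H$ of generic constancy, giving the unique $\ba_H$ with $\Gamma_{\ba_H}$ open dense in $H$, maximal because $C_{\ba_H}$ is the maximal cell meeting $\phi_m(H)$.

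I expect the main obstacle to be the upgrade from constructibility to genuine local closedness. Generic constancy by itself only controls the gin on a dense open of each component of $\overline{\Gamma_\ba}$, and a priori the gin could fail to be locally constant along the boundary of $\Gamma_\ba$. The decisive input is the \emph{openness}, not merely density, in the key statement: this is what makes $Z_i\setminus\Gamma_\ba$ closed, and it rests squarely on $\pi$ being an open morphism applied to the honestly open piece $\sch'_{\ba_Z}$. A secondary nuisance is that $\overline{\Gamma_\ba}$ need not be irreducible, so the openness must be verified on each component separately and then reassembled, using that a finite union of sets closed in $\overline{\Gamma_\ba}$ is again closed.
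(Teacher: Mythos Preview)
Your argument is correct and relies on the same ingredients as the paper—Lemma~\ref{L:basic-cell} applied to the image of $\GL(V^*)\times Z$, the openness of the projection $\pi$, and the identification of $\gin_\prec(I)$ via the maximal cell meeting the slice. The organizational structure, however, differs. The paper proceeds by Noetherian induction: at each stage it takes the irreducible components $H_{\ell j}$ of the complement $Z_\ell=\hilb^P(\bP(V))\setminus\coprod_{i<\ell}\Gamma_{ij}$, proves your ``generic constancy'' for each $H_{\ell j}$ to carve out an open dense $\Gamma_{\ell j}$, and repeats. Local closedness is then automatic, since each $\Gamma_{\ell j}$ is open in the closed set $\overline{H_{\ell j}}$ by construction. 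You instead isolate generic constancy as a standalone statement valid for \emph{any} irreducible locally closed $Z$, and then apply it to the irreducible components of $\overline{\Gamma_\ba}$ to prove local closedness directly, without descent. Your route is a bit cleaner and makes the role of openness of $\pi$ more transparent; the paper's inductive construction has the virtue of producing the strata in an explicit order compatible with the partial order on indices, which is the natural output of a stratification algorithm. One small remark: your appeal to Proposition~\ref{P:Borel} to restrict the index set to Borel-fixed $I_\ba$ mirrors exactly how the paper handles this—neither proof uses Borel-fixedness internally; it only trims the index set after the fact.
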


\begin{proof}
We give an inductive proof. Let $m$ be an integer larger than the Gotzmann number of $P$.
Set $\Gamma_{0j} = \emptyset$, $j \in \bN$. Suppose that we have constructed $\Gamma_{11}, \dots, \Gamma_{1s_1}, \dots, \Gamma_{\ell-1 \, 1}, \dots, \Gamma_{\ell-1 s_{\ell-1}}$ such that for each irreducible component $H_{uj}$ of
\[
Z_{u}:= \hilb^P(\bP(V))\setminus \coprod_{i\le u-1} \Gamma_{ij}, \quad u \le \ell-1
\] there exists a unique $\Gamma_{uj}$ which is  open dense in $H_{uj}$ and an index $\ba^*_{uj}$ such that $\gin_\prec I = I_{\ba^*_{uj}}$ for all $I \in \Gamma_{uj}$.

Let $H_{\ell 1}, \dots, H_{\ell \, s_{\ell}}$ be the irreducible components of $Z_\ell$ defined as above.
Let $\pi_2$ denote the projection from $\GL(V^*)\times H_{\ell j} $ to the  second factor.
Since $\GL(V^*)\times  H_{\ell j}$ is irreducible, by Lemma~\ref{L:basic-cell} there exists a unique maximal index $\ba^*_{\ell j}$ such that $\Psi_m(\GL(V^*)\times H_{\ell j}) \cap C_{\ba^*_{\ell j}}$ is non-empty open in $\Psi_m(\GL(V^*)\times H_{\ell j}))$. Let $U_{\ba^*_{\ell j}}$ be the fibre product $C_{\ba^*_{\ell j}} \times_{\Gr_dS^mV^*} \left(\GL(V^*)\times H_{\ell j}\right)$.
 It is an open subscheme of $\GL(V^*)\times H_{\ell j}$, and its projected image $\Gamma_{\ell j} = \pi_2(U_{\ba^*_{\ell j}})$  is an open subscheme of $H_{\ell j}$ since projections are flat.

For any $[I] \in \Gamma_{\ell j}$, $C_{\ba^*_{\ell j}} \times_{\Gr_dS^mV^*} (\GL(V^*)\times \{[I]\})$ is not empty, and  open  in $\GL(V^*)\times \{[I]\}$ which we identify with $\GL(V^*)$.
Clearly, $\ba^*_{\ell j}$ is the maximal index whose Schubert cell meets $\Psi_m(\GL(V^*)\times \{[I]\})$.
Hence for any $g$ in the {\it open nonempty} subscheme $C_{\ba^*_{\ell j}} \times_{\Gr_dS^mV^*} (\GL(V^*)\times \{[I]\})$ of $\GL(V^*)$, we have $[\ii_\prec(g.I)]_m = [I_{\ba^*_{\ell j}}]_m$, and since $m$ is at least as large as the Gotzmann number, $\ii_\prec(g.I) = I_{\ba^*_{\ell j}}$. That is $\gin_\prec(I) = I_{\ba^*_{\ell j}}$ for any $[I] \in \Gamma_{\ell j}$, and we rename  $\Gamma_{\ell j}$ to  $\Gamma_{\ba^*_{\ell j}}$ and obtain the statement of the theorem.
\end{proof}

\begin{remark} The definition/construction of the locally closed subschemes $\Gamma_{ij}$ depends on the choice of the embedding $\phi_m$ of the Hilbert scheme but their properties determine them uniquely.
\end{remark}

As a corollary, we retrieve the following. Let $\prec$ be a monomial order and $P$ be an admissible Hilbert polynomial.
We assume that $\hilb^P\bP^n$ is embedded in a suitable Grassmannian.
Recall that an {\it initial segment} in degree $d$ and length $\ell$ with respect to $\prec$ is simply the set of the first $\ell$ monomials of degree $d$.
\begin{coro}\cite[Theorem~1.2]{Conca} For any general member $I$ of an  irreducible component $H$ of $\hilb^P\bP^n$, we have
\[
\gin_\prec(I) = I_{\ba^*}
\]
where $\ba^*$ is the maximal index such that $H$ meets $C_{\ba^*}$. In particular, the generic initial ideal of general points in the plane equals the ideal which is generated by initial segments in every degree.
\end{coro}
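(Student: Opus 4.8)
The plan is to read the corollary off Theorem~\ref{T:gin-decomp} once the index $\ba^*$ of the statement is matched with the maximal index produced there. First I would apply Theorem~\ref{T:gin-decomp} to the irreducible component $H$: it furnishes a unique maximal index $\ba_H \in \mathcal{G}$ for which $\Gamma_{\ba_H}$ is Zariski open dense in $H$, and since $\Gamma_{\ba_H} = \{[I] \mid \gin_\prec(I) = I_{\ba_H}\}$ by definition, this says precisely that $\gin_\prec(I) = I_{\ba_H}$ for every $I$ in a dense open subset of $H$, i.e. for the general member. Thus the whole corollary reduces to the identification $\ba_H = \ba^*$, where $\ba^*$ is the maximal index with $\phi_m(H) \cap C_{\ba^*} \ne \emptyset$.

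This identification rests on the connectedness of $\GL(V^*)$, which forces it to preserve each irreducible component of $\hilb^P(\bP(V))$; hence $g.I \in H$ for all $g \in \GL(V^*)$ and all $[I] \in H$. It follows that $\Psi_m(\GL(V^*) \times H) = \phi_m(H)$ as subsets of $\Gr_d S^m V^*$, so that $\phi_m(H)$ is itself irreducible and $\rho_m(\GL(V^*))$-invariant. By Lemma~\ref{L:basic-cell} it meets a unique maximal cell, open and dense in it, and this cell is exactly the index $\ba^*_{\ell j}$ singled out in the proof of Theorem~\ref{T:gin-decomp} at the first stage of the induction (where $H$ is a component of the whole scheme and $Z_1 = \hilb^P(\bP(V))$). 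Therefore $\ba_H = \ba^*$ and $\gin_\prec(I) = I_{\ba^*}$ for the general $I \in H$. As a byproduct, for such general $I$ the Hilbert point $[I]_m$ already lies in the open cell $C_{\ba^*}$, so that $\ii_\prec(I) = \gin_\prec(I) = I_{\ba^*}$: the initial ideal and the generic initial ideal coincide on general members.

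For the final assertion I would specialize to general points of $\bP^2$, where $\hilb^P(\bP^2)$ is irreducible and thus a single component $H$. For a general configuration the graded piece $I_m$ attains its generic dimension $Q(m) = \binom{m+2}{2} - P$ once $m$ exceeds the Gotzmann number. The top $Q(m)$ monomials dominate any other $Q(m)$-tuple componentwise in the order $\prec_s$, so they form the $\prec_s$-maximal index of length $Q(m)$; since a general configuration realizes this tuple as $\ii_\prec(I)_m$, maximality of $\ba^*$ forces the leading monomials $x^{\a^*(1)} \succ \cdots \succ x^{\a^*(Q(m))}$ to be exactly those top monomials, i.e. the initial segment of length $Q(m)$. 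Passing to the saturation via Lemma~\ref{L:Iba} then exhibits $I_{\ba^*}$ as the ideal generated by initial segments in every degree, which is the claimed lexicographic description and recovers \cite[Theorem~1.2]{Conca}.

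The first two paragraphs are essentially a repackaging of Theorem~\ref{T:gin-decomp}, so I expect the genuine obstacle to lie in the last step: making rigorous that a general configuration of points actually realizes the top-monomial initial space, so that no cell above the lex-segment cell can be met. This is the point where a general-position (Macaulay/Gotzmann) input is needed rather than the invariance bookkeeping, and where the passage from maximality in $\prec_s$ to the explicit initial-segment ideal must be justified.
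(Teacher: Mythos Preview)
Your treatment of the first statement is correct and mirrors the paper: both read it off Theorem~\ref{T:gin-decomp} and the identification of $\ba_H$ with the maximal Schubert index meeting $\phi_m(H)$, using that $\GL(V^*)$ preserves the component $H$ so that $\Psi_m(\GL(V^*)\times H)=\phi_m(H)$.

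The gap you flag in the last paragraph is real, and the paper closes it by a different and much lighter route. You try to show that a \emph{general} configuration of points has $\ii_\prec(I)_m$ equal to the top initial segment; this is the hard direction and is essentially what one is trying to prove. The paper instead only needs the \emph{existence} of a single ideal with Hilbert polynomial $P$ that is generated by $\prec$-initial segments in every degree, which it imports from \cite[Lemma~5.5]{Conca}. Since $\hilb^P(\bP^2)$ is smooth and irreducible (Fogarty), that ideal lies in the unique component $H$, and its $m$th Hilbert point sits in the absolutely maximal cell $C_{\ba^{\mathrm{top}}}$; hence $\ba^*=\ba^{\mathrm{top}}$ and $I_{\ba^*}$ is the initial-segment ideal. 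So the missing ingredient is not a genericity argument but an existence input: replace ``a general configuration realizes this tuple'' by ``some ideal with Hilbert polynomial $P$ realizes this tuple,'' cite the relevant lemma, and you are done.
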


\begin{proof} The first statement is straight from Theorem~\ref{T:gin-decomp} and its proof. The second statement follows since a Hilbert scheme of points on a smooth surface is smooth and irreducible, and there exists an ideal with Hilbert polynomial $P$ that is generated by initial segments in all degrees \cite[Lemma~5.5]{Conca}. Note that the assertion does not depend on the embedding $\phi_m$ since $I_{\ba^*}$ remains the same by Lemma~\ref{L:Iba}.
\end{proof}

\begin{remark} Although we retrieve the main statement of Theorem~1.2 of \cite{Conca}, Conca and Sidman do more: They
explicitly give a set of conditions on the points that guarantee that the generic initial ideal is the initial segment ideal.
\end{remark}

\section{Primary and secondary generic initial ideals}
We retain the notations from the previous section. As an application of our geometric study of the Gin decomposition of the Hilbert scheme, we give a  geometric proof of the existence of generic initial ideals and their Borel-fixed properties. One of the key ingredients is that initial ideals can be thought of as flat limits with respect to a one-parameter subgroup action: Dave Bayer and Ian Morrison used this in their study of state polytopes of Hilbert points \cite{BS}, and more recently Morgan Sherman has also used it to prove that the one-parameter subgroup  \cite{Sherman} taking an ideal to its generic initial ideal is also Borel fixed.

Fix a saturated ideal $I \subset k[x_0,\dots,x_n]$ with Hilbert polynomial $P$, and consider the orbit map
\[
(\Psi_m)_I : \GL(V^*) \simeq \GL(V^*)\times [I] \inj \GL(V^*) \times \hilb^P(\bP(V)) \stackrel{\Psi_m}{\to} \Gr_dS^mV^*.
\]
In short, $(\Psi_m)_I(g) = [g.I]_m$. We have the induced decomposition
\[
\GL(V^*) \simeq \GL(V^*)\times [I] = \coprod_\ba (\GL(V^*)\times [I]) \cap \sch'_\ba.
\]
We let $\sch''_\ba$ denote $(\GL(V^*)\times [I]) \cap \sch'_\ba$ and regard it as a locally closed subscheme of $\GL(V^*)$. From Proposition~\ref{P:sch-decompI} and its proof,  we easily obtain

\begin{prop}\label{P:sch-decompII} There is a finite decomposition of $\GL(V^*)$ into locally closed subschemes
\[
\GL(V^*) = \coprod_{\ba} \sch''_\ba
\]
such that
\begin{enumerate}
\item if  $\ba^\star$ is the maximal index (such that $\sch''_\ba \ne \emptyset$), $\sch''_{\ba^\star}$ is Zariski open dense;
\item $g, g'$ are in the same $\sch''_\ba$ if and only if $\ii_\prec(g.I) = \ii_\prec(g'.I)$;
\item each $\sch''_\ba$ is $B$-invariant.
\end{enumerate}
\end{prop}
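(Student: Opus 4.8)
The plan is to obtain this decomposition by restricting the Schubert decomposition of Proposition~\ref{P:sch-decompI} to the single orbit $\GL(V^*)\times\{[I]\}$, or equivalently by pulling back the Schubert cells of $\Gr_d S^m V^*$ along the orbit map $(\Psi_m)_I$. By definition $\sch''_\ba=(\GL(V^*)\times[I])\cap\sch'_\ba=(\Psi_m)_I^{-1}(\sch_\ba)$, so since the $\sch_\ba$ are finitely many locally closed subschemes partitioning $\Gr_d S^m V^*$ and $(\Psi_m)_I$ is a morphism, the $\sch''_\ba$ form a finite partition of $\GL(V^*)$ into locally closed subschemes. This yields the asserted decomposition, with finiteness and local closedness free of charge.

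Properties (2) and (3) I would then read off directly. For (2), Lemma~\ref{L:in} identifies $\sch''_\ba$ with $\{g : \ii_\prec(g.I)=I_\ba\}$; since $g.I$ is again a saturated ideal with Hilbert polynomial $P$ it always lands in some cell, and distinct indices give distinct $I_\ba$ (their degree-$m$ parts are spanned by different monomial sets), so $g,g'$ share a cell precisely when $\ii_\prec(g.I)=\ii_\prec(g'.I)$. For (3), the action $b.(g,[I])=(bg,[I])$ preserves the slice $\GL(V^*)\times\{[I]\}$, and $\sch'_\ba$ is $B$-invariant by Lemma~\ref{L:B-invar}, so the intersection $\sch''_\ba$ is $B$-invariant.

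The only point needing a genuine argument is (1). Here I would resist importing the maximal index $\ba^\star$ of an ambient component $H\ni[I]$ from Proposition~\ref{P:sch-decompI}, since the top cell meeting all of $\GL(V^*)\times H$ need not be the one meeting the orbit of this particular $[I]$. Instead I would apply Lemma~\ref{L:basic-cell} directly to $X:=\Psi_m(\GL(V^*)\times\{[I]\})$, which is irreducible as the continuous image of the irreducible group $\GL(V^*)$. The lemma produces a \emph{unique} maximal index $\ba^\star$ for which $C_{\ba^\star}\cap X$ is nonempty and open in $X$. Pulling back along the continuous map $\GL(V^*)\to X$ shows that $\sch''_{\ba^\star}=(\Psi_m)_I^{-1}(C_{\ba^\star}\cap X)$ is open in $\GL(V^*)$, and it is dense because $\GL(V^*)$ is irreducible; the uniqueness of $\ba^\star$ is the last clause of Lemma~\ref{L:basic-cell}.

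As the phrase ``we easily obtain'' suggests, I expect no serious obstacle: the whole statement is a formal restriction of an already-established decomposition. The single thing to watch is precisely the one flagged above, namely that the relevant irreducible object is the orbit $\GL(V^*)\times\{[I]\}$ and not an ambient Hilbert-scheme component, so the top stratum must be recomputed from $X$ rather than inherited from Proposition~\ref{P:sch-decompI}.
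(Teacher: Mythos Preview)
Your proposal is correct and matches the paper's approach: the paper simply writes ``From Proposition~\ref{P:sch-decompI} and its proof, we easily obtain'' and gives no further argument, and what you have written is exactly the intended restriction of that decomposition to the slice $\GL(V^*)\times\{[I]\}$, using Lemmas~\ref{L:in}, \ref{L:B-invar}, and~\ref{L:basic-cell} in the same way. Your explicit caution that the maximal index $\ba^\star$ must be computed from the orbit itself rather than inherited from an ambient component $H$ is a genuine clarification beyond what the paper spells out.
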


\begin{remark}  $C''_{\ba^\star}$ meets the unipotent subgroup $U = \{g \in B \, | \, g_{\a\a} = 1, \forall \a\}$ since $B^oU$ is Zariski open in $\GL(V^*)$. See for instance, \cite[Theorem~15.18]{Eisenbud}.)
\end{remark}

\begin{defn}\label{D:gin} \emph{The (primary) generic initial ideal} of $[I]$ is $\ii_\prec(g.I)$ for any $g \in \sch''_{\ba^\star}$, and it equals $I_{\ba^\star}$. The {\it secondary generic initial ideal} with respect to $\ba \ne \ba^\star$ is $I_\ba = \ii_\prec(g.I)$ for $g \in \sch''_\ba$.
\end{defn}

Let $B^o$ denote the opposite Borel subgroup:
\[
B^o := \{g \in \GL(V^*) \, | \, g. x^\a = \sum c_{\a\b} x^\b, c_{\a\b} = 0, \, \forall \b \prec \a \}.
\]
One sees from the definition of $B$ and $B^o$ that, for any $b \in B$ (resp. $b\in B^o$) and $[I]_m \in C_{\vec\a} \subset \Gr_dS^mV^*$,  $b.[I]_m \in C_{\vec\b}$ with $\b \preceq \a$ (resp. $\b \succeq\a$). We symbolically write
\[
B^o.[I]_m \succeq [I]_m \succeq B.[I]_m.
\]

\begin{prop}\label{P:Borel} \cite{Galligo, Bayer-Stillman, Pardue} The primary generic initial ideals are Borel fixed. That is, $B^o \gin_\prec(I) = \gin_\prec(I)$.
\end{prop}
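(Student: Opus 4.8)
The plan is to show Borel fixedness by exploiting the maximality of the primary index $\ba^\star$ together with the monotonicity of the $B$- and $B^o$-actions on Schubert cells recorded just before the statement, namely $B^o.[I]_m \succeq [I]_m \succeq B.[I]_m$. Concretely, I would fix a representative $g \in \sch''_{\ba^\star}$ so that $\ii_\prec(g.I) = I_{\ba^\star}$, which means the Hilbert point $[g.I]_m$ lies in the cell $C_{\ba^\star}$. The goal is to prove $B^o.I_{\ba^\star} = I_{\ba^\star}$, and since $I_{\ba^\star}$ is already a monomial ideal, it suffices to check invariance under the opposite Borel action on its degree-$m$ piece, i.e.\ that $b^o.[I_{\ba^\star}]_m$ still lies in $C_{\ba^\star}$ for every $b^o \in B^o$.

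The key step is the following tension. On one hand, $\sch''_{\ba^\star}$ is Zariski open dense in $\GL(V^*)$ by Proposition~\ref{P:sch-decompII}(1), so for generic $g$ we have $\ii_\prec(g.I) = I_{\ba^\star}$ with $\ba^\star$ the \emph{maximal} index meeting the orbit. On the other hand, for $b^o \in B^o$ the monotonicity relation gives $b^o.[g.I]_m \succeq [g.I]_m$, i.e.\ the index can only go \emph{up} under the opposite Borel. But $\ba^\star$ is already maximal among indices whose cells meet the orbit $\GL(V^*).[I]_m$, and $b^o.[g.I]_m = [(b^o g).I]_m$ is still a point of this same orbit. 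Therefore the index of $b^o.[g.I]_m$ cannot strictly exceed $\ba^\star$; combined with $b^o.[g.I]_m \succeq [g.I]_m = I_{\ba^\star}$, it must equal $\ba^\star$. Hence $b^o.[I_{\ba^\star}]_m \in C_{\ba^\star}$, and by Lemma~\ref{L:Iba} (using $m \ge m_0$) this forces $\ii_\prec(b^o.I_{\ba^\star}) = I_{\ba^\star}$. Since $I_{\ba^\star}$ is a monomial ideal fixed by the diagonal torus, and its initial ideal under any $b^o \in B^o$ returns itself, one deduces $b^o.I_{\ba^\star} = I_{\ba^\star}$, giving Borel fixedness.

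The main obstacle I expect is the last inferential leap: knowing that the \emph{initial ideal} $\ii_\prec(b^o.I_{\ba^\star})$ equals $I_{\ba^\star}$ does not immediately give that $b^o.I_{\ba^\star}$ itself equals $I_{\ba^\star}$, since many ideals share an initial ideal. To close this gap I would argue at the level of Hilbert points in the fixed degree $m$. The point $[I_{\ba^\star}]_m \in C_{\ba^\star}$ is the distinguished torus-fixed point $E_{\ba^\star}$ spanned by the monomials $x^{\a(1)},\dots,x^{\a(d)}$, and within the cell $C_{\ba^\star}$ it is characterized as the unique $B$-fixed (equivalently, the initial) point. Applying $b^o \in B^o$ moves $E_{\ba^\star}$ to another point of $C_{\ba^\star}$ whose initial subspace is again $\langle x^{\a(1)},\dots,x^{\a(d)}\rangle$; but $E_{\ba^\star}$ is the unique point of $C_{\ba^\star}$ that is spanned by monomials, so $b^o.E_{\ba^\star} = E_{\ba^\star}$, i.e.\ $b^o.[I_{\ba^\star}]_m = [I_{\ba^\star}]_m$. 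Since this holds for all $b^o \in B^o$ and $m \ge m_0$ determines the saturated ideal, we conclude $B^o.I_{\ba^\star} = I_{\ba^\star}$. The delicate point to verify carefully is precisely this uniqueness of the monomial point inside a Schubert cell, which is exactly where the Borel-fixedness is genuinely encoded and which parallels the classical argument about non-vanishing of the leading Pl\"ucker coordinate alluded to in the introduction.
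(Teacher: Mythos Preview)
Your argument breaks at the step where you invoke monotonicity for $b^o.[g.I]_m$. The relation $B^o.[J]_m \succeq [J]_m$ displayed before the proposition is only valid when $[J]_m$ is a coordinate subspace $E_{\ba}$ (a torus-fixed point): for $b\in B^o$ one has $(\wedge^d b).e_{\vec\gamma} = e_{\vec\gamma} + (\text{higher terms})$, so the leading Pl\"ucker coordinate of $E_{\ba}$ cannot be killed. For a general $F\in C_{\ba}$ it can. Concretely, in $\Gr_1(k\langle x,y\rangle)$ with $x\succ y$, take $F=\langle x+y\rangle\in C_x$ and $b\in B^o$ with $b.x=x$, $b.y=-x+y$; then $b.F=\langle y\rangle\in C_y$, so $B^o$ has moved $F$ \emph{down}. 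Thus you cannot conclude $b^o g\in C''_{\ba^\star}$ from $g\in C''_{\ba^\star}$. In fact this conclusion is impossible: combined with the $B$-invariance of $C''_{\ba^\star}$ from Proposition~\ref{P:sch-decompII}(3), it would make $C''_{\ba^\star}$ invariant under the group generated by $B$ and $B^o$, namely all of $\GL(V^*)$; since the left regular action is transitive this forces $C''_{\ba^\star}=\GL(V^*)$, i.e.\ $\ii_\prec(g.I)=\gin_\prec(I)$ for \emph{every} $g$, which is false in general.

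The third paragraph does not repair this. You assert that $b^o.E_{\ba^\star}$ lands in $C_{\ba^\star}$ and then appeal to uniqueness of the monomial point, but the first assertion is exactly what must be proved, and $b^o.E_{\ba^\star}$ is not spanned by monomials, so the uniqueness does not bite. What the paper supplies here is the missing bridge between the orbit point $[g.I]_m$ and the torus-fixed point $E_{\ba^\star}$: a diagonal one-parameter subgroup $\lambda$ with $\lim_{t\to 0}\lambda(t).[g.I]_m = E_{\ba^\star}$. If $b.E_{\ba^\star}\in C_{\vec\beta}$ with $\vec\beta\succ\ba^\star$ (this is where monotonicity is legitimately used, on the torus-fixed point), then the curve $t\mapsto b\lambda(t)g.[I]_m$ lies in $\overline{C_{\vec\beta}}$ and limits into the open cell $C_{\vec\beta}$, so some $t\ne 0$ already gives a point of the $\GL(V^*)$-orbit in $C_{\vec\beta}$, contradicting the maximality of $\ba^\star$. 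The one-parameter degeneration is the essential geometric input your outline lacks.
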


\begin{proof} Let $[I] \in \hilb^P\bP(V)$ and $\vec\a^\star$ be the maximal index for $[I]_m$ i.e. $I_{\ba^\star} = \ii_\prec(g.I)  = \gin_\prec(I)$ for any $g \in C''_{\vec{\a}^\star}$. We fix a favorite $g$ and work with it for the rest of this proof. Let $b \in B^o$ and suppose $b.I_{\ba^\star} \ne I_{\ba^\star}$. Since $B^o.[I]_m \succeq [I]_m$,  $b.I_{\ba^\star} \in C_{\vec\b}$ for some $\vec\b \succ \ba$.

There is a one-parameter subgroup $\lambda : \bG_m \to \GL(V^*)$, diagonalized by the basis $\{x_0, \dots, x_n\}$, such that $ \lim_{t\to 0} \lambda(t).g.[I]_m = \ii_\prec(g.[I]_m)$: Due to \cite[Proposition~1.11]{Sturmfels}, there exists $\omega \in \bZ_{\ge 0}^{n+1}$ such that $\ii_\omega (g.I) = \ii_\prec (g.I)$, where $\ii_\omega(I)$ means the ideal generated by the initial {\it forms} $\ii_\omega(f)$ with respect to the partial weight order defined by $\omega$, $\forall f \in I$. Such $\omega$ is obtained by computing a Gr\"obner basis $\mathcal G$ and choosing $\omega$ such that $\ii_\omega(f) = \ii_\prec(f)$ for all $f \in \mathcal G$. Let $\lambda : \bG_m \to \GL(V^*)$ be the 1-PS associated to $-\omega$ i.e. $\lambda(t).x_i = t^{-\omega_i} x_i$. Then $
[\ii_\prec(g.I)] = \lim_{t\to 0} \lambda(t).[g.I]$
in the $\hilb^P(\bP(V))$ \cite[Corollary~3.5]{BM}.

Since $\lambda$ is diagonalized by $\{x_0,\dots,x_n\}$, the Schubert cells are invariant under its action. By our choice of $g \in C''_{\ba^*}$, $g.[I]_m \in C_{\ba^*}$ and hence $\lambda(t).g.[I]_m$ is contained in $C_{\ba^*}$, $\forall t \ne 0$.  Since $\ba^\star \prec \vec\b$, $C_{\ba^\star} \subset Z:=\overline C_{\vec\beta}$ and hence we have $\overline{\lambda(\bG_m).g.[I]_m} \subset Z$. The Schubert cells are locally closed, so $C_{\vec\b}$ is open in $Z$. Since the limit of $\lambda(t).g.[I]_m$ is in the open set $C_{\vec\b}$ (of $Z$), it follows that $\lambda(t).g.[I]_m \in C_{\vec\b}$ for some $t \ne 0$. This contradicts the $\lambda(\bG_m)$-invariance of $C_{\ba^*}$.

\end{proof}

\section{Other stratifications and covers}\label{S:other}
In this section, we shall describe related works and point out the apparent and crucial differences that distinguish our work. Let $\bP(V)$, $k[x_0,\dots, x_n] = \oplus_m S^m V^*$, and $\hilb^P \bP(V)$ be as before in Section~\ref{S:Hilb}.

\subsection{Stratification according to the initial ideals}
The first work appearing in the literature regarding the stratification
\[
\hilb^P \bP(V) = \coprod H_{I_o}
\]
of Hilbert schemes by using initial ideal is \cite{Notari-Spreafico} which we retrieved in Corollary~\ref{C:in-decomp}. This stratification is clearly different from ours. In their stratification, there is a unique stratum for each monomial ideal whereas in ours, there is a unique stratum for each {\it Borel fixed} ideal. Hence the stratification $\hilb^P (\bP(V)) = \coprod H_{I_o}$ has far more strata. Also, a stratum $H_{I_o}$ in general is not contained in one of our strata $\Gamma_{\ba}$ since $\ii_\prec I = \ii_\prec J$ does not imply $\gin_\prec I = \gin_\prec J$: Let $\prec$ be the degree reverse lexicographic order. There are ideals $I$ whose regularity is strictly lower than that of $J = \ii_\prec I$. Then $\ii J = \ii (\ii I) = \ii I$ but $\gin I \ne \gin J$ since the regularity is preserved under taking the generic initial ideal with respect to the degree lexicographic order. This was pointed out to the author by Hwangrae Lee.

Notari and Spreafico studied the properties of the strata and showed that $H_{I_0}$ is isomorphic to an affine space if $H_{I_0}$ is nonsingular at the Hilbert point of $I_0$. They also considered the strata $H_{I_\star}$ that contains an open subset (of an irreducible component $H$) and showed that $I_\star$ should be Borel fixed.
The distinguished open subschemes $H_{I_\star}$ and $\Gamma_{\alpha_H}$ (from Theorem~\ref{T:gin-decomp}) are more closely related than others. First off, the indices $I_\star$ and $\alpha_H$ are both determined by the largest Schubert cell that intersects the Grothendieck-Pl\"ucker image of $H$ (as in Proposition~\ref{P:sch-decompI}), so $I_\star = I_{\alpha_H}$. Also, if $\ii I = I_\star$, then $\gin I = I_\star$ due to Lemma~\ref{L:pushup}. Hence we conclude that $H_{I_\star} = H_{I_{\alpha_H}} \subset \Gamma_{\alpha_H}$. They are not equal in general, as can be easily seen in the hypersurface cases.

\subsection{Borel open cover} In \cite{BLR}, Bertone, Lella and Roggero considered the {\it open cover} $\hilb^P\bP(V) = \bigcup_{g, J} H_J^g$ of the Hilbert scheme, where
\begin{itemize}
\item[i.] the indices $g$ and $J$ respectively run over $\mathrm{PGL}(V^*)$ and the set of all Borel fixed ideals of Hilbert polynomial $P$, and
\item[ii.] the open subscheme $H_J^g$ is the $g$-translate of the open subscheme $\phi_m^{-1}\left(C_{\ba}\right)$ where $\ba$ is the index satisfying $J = I_{\ba}$.
\end{itemize}

Note that $H_{I_\ba}^g$ is an {\it open subscheme} complement to the hypersurface $ \{J \in \hilb^P\bP(V) \, | \, p_\ba(g.J_m) = 0 \}$ whereas our stratum $\Gamma_{\ba}$ is derived from the {\it locally closed} subscheme
that misses the hypersurface $\{ J \in \hilb^P\bP(V) \, | \, p_\ba(J_m) = 0 \}$ and is contained in the closed subscheme $\cap_{\ba' \succ \ba} \{ p_{\ba'}(J_m) = 0 \}$.  If $g.J$ has initial ideal $I_\ba$, then $J \in H_{I_\ba}^g$, but there are ideals $J \in H_{I_\ba}^g$ whose initial ideal after coordinate change by $g$ differs from $I_\ba$. Hence the Borel open cover does not give information about our stratification in Theorem~\ref{T:gin-decomp}.

\section{Grothendieck-Pl\"ucker embedding is degenerate}\label{S:degenerate}

Retain notations from Section~\ref{S:sch-hilb}. Let $P$ be a non-constant admissible Hilbert polynomial of a graded ideal of $S = k[x_0,\dots,x_n]$, and let $m_o$ denote its Gotzmann number.

\begin{theorem}\label{T:degenerate} The Grothendieck-Pl\"ucker image $\phi_{m} (\hilb^P\bP^n)$ is degenerate for $m > m_0$ unless $P$ is a constant.
\end{theorem}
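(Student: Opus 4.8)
We want to show that the Grothendieck-Plücker image $\phi_m(\hilb^P(\bP^n))$ sits inside a proper linear subspace of $\bP(\bigwedge^{Q(m)} S_m)$, i.e. it is degenerate. The Plücker coordinates are indexed by the monomials $e_{\vec\alpha}$ of $\bigwedge^{Q(m)} S_m$, where $\vec\alpha$ runs over indices (length-$Q(m)$ increasing sequences of degree-$m$ monomials). "Degenerate" means there is a nontrivial linear relation $\sum_{\vec\alpha} c_{\vec\alpha} p_{\vec\alpha} = 0$ on the image; the cleanest way to produce one is to find a single Plücker coordinate $p_{\vec\beta}$ that vanishes identically on the image, i.e. an index $\vec\beta$ such that $p_{\vec\beta}([I]_m) = 0$ for every $[I] \in \hilb^P(\bP^n)$.

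**The key idea.** Since the image is a union of Schubert-cell strata $C_{\vec\alpha}$ over the indices $\vec\alpha \in \mathcal G$ that occur (those with $I_{\vec\alpha}$ Borel fixed), and by the Schubert-cell defining equation~(\ref{E:Schubert-cell}) the coordinate $p_{\vec\beta}$ vanishes on $C_{\vec\alpha}$ whenever $\vec\beta \succ \vec\alpha$, it suffices to find one index $\vec\beta$ that is strictly larger than *every* occurring index. Equivalently, letting $\vec\alpha_{\max}$ be the lexicographically largest index among those with $I_{\vec\alpha}$ Borel fixed and Hilbert polynomial $P$, I want to exhibit a single index $\vec\beta$ with $\vec\beta \succ \vec\alpha$ for all occurring $\vec\alpha$, and conclude $p_{\vec\beta}|_{\phi_m(\hilb)} \equiv 0$.

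**The plan.** First I would recall that the occurring indices correspond precisely to the degree-$m$ pieces of saturated ideals with Hilbert polynomial $P$, equivalently (by Lemma~\ref{L:Iba}) to the Borel-fixed monomial ideals of Hilbert polynomial $P$; by Galligo/Bayer--Stillman these all have the same Hilbert *function* in degree $m \ge m_0$, hence the same number $Q(m)$ of monomials, and the generic initial / lex ideal gives the *largest* such index. The crux is then a combinatorial comparison: I must show that the largest index $\vec\alpha_{\max}$ realizable by a Borel-fixed ideal is strictly smaller than the absolute maximum possible length-$Q(m)$ index, namely the one selecting the top $Q(m)$ monomials $x_0^m \succ x_0^{m-1}x_1 \succ \cdots$ in the monomial order. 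This is where the hypothesis that $P$ is nonconstant enters: if $P$ is nonconstant then $Q(m) = \binom{n+m}{m} - P(m)$ grows, but the Borel-fixed (saturation) condition forces the ideal to omit certain low monomials — in particular a saturated ideal cannot contain a pure power like $x_n^m$ without containing $x_n^{m-1}$ up to saturation, so the top-$Q(m)$ "greedy" index is *never* the degree-$m$ piece of a saturated ideal. Concretely, I would invoke the Marinari--Ramella result (cited in the acknowledgments for exactly this theorem) to pin down the structure of Borel-fixed ideals and produce an explicit monomial that must lie outside $I_m$ for every such $I$, which furnishes the required $\vec\beta$.

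**The main obstacle.** The genuinely delicate step is the combinatorial/numerical verification that, for a nonconstant $P$ and all $m > m_0$, the strict inequality $\vec\alpha_{\max} \prec \vec\beta$ holds for an explicitly constructible $\vec\beta$ — i.e. that the saturation constraint strictly lowers the top index. I expect the cleanest route is to argue at a single coordinate: find a fixed degree-$m$ monomial $x^{\vec\gamma}$ (say the largest monomial that the Borel/saturation structure forbids from lying in $I_m$, which should be related to $x_0^a x_n^{m-a}$ for suitable $a$ depending on $\deg P$) such that $x^{\vec\gamma} \notin I_m$ for every saturated $I$ with Hilbert polynomial $P$; then the index $\vec\beta$ obtained by replacing the smallest entry of the largest occurring index with $x^{\vec\gamma}$ dominates all occurring indices, forcing $p_{\vec\beta} \equiv 0$. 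Verifying that this monomial is genuinely excluded for *all* such ideals simultaneously — not just the generic or lex one — is the heart of the matter, and is precisely what the Marinari--Ramella input is designed to supply; the nonconstancy of $P$ guarantees $Q(m)$ is large enough that such an excluded monomial, and hence a dominating $\vec\beta$, actually exists.
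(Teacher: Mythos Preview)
Your high-level strategy --- show that the image misses the top Schubert cell $C_{\ba^{*,m}}$, hence lies in the hyperplane $\{p_{\ba^{*,m}} = 0\}$ --- matches the paper's. But the heart of the argument is precisely the step you label ``the main obstacle,'' and your proposal does not actually resolve it. You plan to locate a monomial $x^\gamma$ excluded from $I_m$ for every saturated $I$ with Hilbert polynomial $P$, invoking Marinari--Ramella; however you never commit to a monomial order, never name the monomial, and never explain why such a universally excluded monomial would sit among the top $Q(m)$ monomials (which is what is needed for it to force $p_{\ba^{*,m}} = 0$). Your construction of $\vec\beta$ by ``replacing the smallest entry of $\ba_{\max}$ with $x^\gamma$'' is both unnecessary (the absolute top index already serves) and not shown to dominate all occurring indices.

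The paper's mechanism is sharper and self-contained. It fixes the \emph{degree reverse lexicographic} order and proves the elementary Lemma~\ref{L:revlex}: if a revlex initial segment $I_m \subset S_m$ contains $x_{n-1}^m$, then every monomial outside $I_m$ is divisible by $x_n$, and multiplication by $x_n$ bijects this complement onto the complement of $S_1 I_m$ in $S_{m+1}$; hence $\dim S_m - \dim I_m$ is constant in $m$. Corollary~\ref{C:revlex} then says: if a graded ideal $J$ has $J_m$ equal to a revlex initial segment and is generated in some degree strictly below $m$, its Hilbert polynomial is constant. The theorem follows at once: if $\phi_m(H)$ met the top cell, some $I$ would have $(\ii_\prec I)_m$ equal to the top revlex segment; but $\ii_\prec I$ lies in $H$ (as a flat limit) and is generated in degree $\le m_0 < m$, so the corollary forces $P$ constant. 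Thus the hypothesis $m > m_0$ is used to guarantee generation in strictly lower degree, not merely equality of Hilbert function and polynomial, and the choice of revlex is essential because only for revlex does the complement of the top segment admit the ``divisible by $x_n$'' description. Marinari--Ramella is not invoked in the written proof.
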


We first prove the following key lemma.
\begin{lemma}\label{L:revlex} Let $I_m$ be a subspace of $S^mV^*$ generated by an initial reverse lexicographic segment of monomials.
\begin{enumerate}
\item
 For any $l>0$, $S_lI_m$ is also generated by an initial reverse lexicographic segment if and only $\dim_kI_m\ge\binom{n+m-1}{m}$, i.e. if and only if $I_m$ contains $x_{n-1}^m$.

\item Suppose that $I_m$ contains $x_{n-1}^m$. Then
 $\dim_k S^mV^*-\dim_kI_m = \dim_kS^{m+l}V^*- \dim_kS_lI_m$. In particular,  if an ideal $J$ is generated in degree $\le m$ and $J_m = I_m$, then its Hilbert polynomial is a constant. \end{enumerate}

\end{lemma}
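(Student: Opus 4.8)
The plan is to understand the combinatorics of the reverse lexicographic order well enough to track what happens to an initial reverse lexicographic segment $I_m \subset S^mV^*$ when I multiply by the full degree-$l$ piece $S_l$. The degree reverse lexicographic order has the distinguishing feature that $x_n$ (the smallest variable) plays a special role: a monomial $x^\a$ is large in revlex essentially when its exponent on the smallest variables is small. Concretely, the first $\binom{n+m-1}{m}$ monomials of degree $m$ are exactly those divisible by none of the variables below a certain threshold; the claim that $\dim_k I_m \ge \binom{n+m-1}{m}$ is equivalent to $x_{n-1}^m \in I_m$ is the assertion that the initial segment has grown past the ``pure power of $x_{n-1}$'' boundary. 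So the first thing I would do is set up precise notation for the revlex order and identify the initial segment of a given length with an explicit, manageable description of its monomials.

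\textbf{Part (1).} For the forward direction, I would show that if $I_m$ contains $x_{n-1}^m$, then $I_m$ already contains \emph{every} monomial of degree $m$ that involves the variable $x_n$ to the power $0$ together with a suitable tail, and more to the point that multiplying by $S_l$ cannot create a ``gap'': I want to argue that $S_l I_m$ is again an initial revlex segment. The natural mechanism is that the revlex order is compatible with the multiplication-by-$S_l$ map in the following sense — if a degree-$m+l$ monomial $x^\g$ is revlex-larger than some monomial $x^\d \in S_l I_m$, then $x^\g$ is also divisible by some monomial of $I_m$, hence lies in $S_l I_m$. This is exactly the Gotzmann/Macaulay persistence property specialized to revlex segments, and the hypothesis $x_{n-1}^m \in I_m$ is precisely what guarantees it. For the converse, I would exhibit that when $x_{n-1}^m \notin I_m$ (so the segment stops strictly before reaching $x_{n-1}^m$), the product $S_l I_m$ fails to be an initial segment by producing an explicit degree-$(m+l)$ monomial that is larger than some element of $S_l I_m$ yet not contained in it. I expect this citation to \cite{Marinari-Ramella} (flagged in the acknowledgments as supplying a key ingredient) to furnish exactly the persistence statement I need, so I would lean on it rather than reprove it from scratch.

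\textbf{Part (2).} The dimension identity $\dim_k S^mV^* - \dim_k I_m = \dim_k S^{m+l}V^* - \dim_k S_l I_m$ says that the colength of the segment is preserved under multiplication by $S_l$. Given Part (1), both sides count the monomials \emph{outside} an initial revlex segment in their respective degrees, and these complementary segments are the \emph{final} (revlex-smallest) monomials. The revlex-smallest monomials of degree $d$ are exactly those divisible by a high power of $x_n$; once $x_{n-1}^m \in I_m$, the complement of $I_m$ consists entirely of monomials divisible by $x_n$, and the map $x^\b \mapsto x_n^l x^\b$ gives a bijection between the complement in degree $m$ and the complement in degree $m+l$. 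I would verify this bijection explicitly and check it lands in the complement of $S_l I_m$ and is onto, which yields the equality of colengths. The final assertion — that an ideal $J$ generated in degrees $\le m$ with $J_m = I_m$ has constant Hilbert polynomial — then follows because, for $l \ge 0$, $J_{m+l} = S_l J_m = S_l I_m$ by the generation hypothesis, so $\dim_k S^{m+l}V^* - \dim_k J_{m+l}$ is the constant value $\dim_k S^mV^* - \dim_k I_m$, i.e.\ the Hilbert function of $S/J$ is eventually constant.

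\textbf{The hard part} will be the ``only if'' direction of Part (1): showing that failure of the $x_{n-1}^m$ condition genuinely breaks the initial-segment property of $S_l I_m$, rather than merely making the persistence argument unavailable. I would handle this by a careful smallest-counterexample analysis in the boundary case, comparing the revlex order of the relevant degree-$(m+l)$ monomials directly. The rest is bookkeeping once the revlex combinatorics — in particular the characterization of revlex-initial segments by divisibility of their complements by powers of $x_n$ — is pinned down cleanly at the outset.
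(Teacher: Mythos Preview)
Your outline is essentially correct and, for Part~(2), identical to the paper's: both use the bijection $\mu\mapsto x_n\mu$ between the monomials omitted from $I_m$ and those omitted from $S_1I_m$, iterated to handle general $l$.

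For Part~(1), however, you have inverted the difficulty and diverged from the paper in one substantive way. The direction you flag as ``the hard part'' --- showing that $S_lI_m$ fails to be an initial segment when $x_{n-1}^m\notin I_m$ --- is in fact the easy one. The paper dispatches it in two lines via the contrapositive: if $S_lI_m$ is a revlex initial segment, it contains some monomial divisible by $x_n$ (namely anything in $x_n^lI_m$), hence contains $x_{n-1}^{m+l}$; but the only way $x_{n-1}^{m+l}$ can lie in $S_lI_m$ is if $x_{n-1}^m\in I_m$. Your plan to exhibit an explicit witness is the same argument in contrapositive form.

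The direction that actually requires care is the one you propose to outsource to \cite{Marinari-Ramella}. The paper does \emph{not} invoke that reference here; instead it gives a short self-contained argument, reducing to $l=1$ and using the key revlex property that $\mu_2/x_i\preceq\mu_2/x_n$ for any $x_i\mid\mu_2$. Concretely: if $\mu_1\succ\mu_2$ with $\mu_1\notin S_1I_m$ and $\mu_2\in S_1I_m$, then both are divisible by $x_n$ (since every degree-$m$ monomial not divisible by $x_n$ is $\succeq x_{n-1}^m$, hence already in $I_m$), and from $\mu_2/x_i\in I_m$ one climbs to $\mu_1/x_n\in I_m$, a contradiction. This elementary argument is worth knowing; leaning on an external persistence statement obscures that the whole lemma is a direct consequence of the divisibility behavior of revlex.
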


\begin{proof} (1) \, One direction is straightforward: Assume that $S_lI_m$ is generated by an initial reverse lexicographic segment. Since $S_lI_m$ contains a monomial divisible by $x_n$, $x_{n-1}^{m+l}$ is also contained in $S_lI_m$. Thus $x_{n-1}^m$ must be contained in $I_m$.

Conversely, assume that $I_m$ contains $x_{n-1}^m$. It is enough to prove that $S_1I_m$ is generated by an initial reverse lexicographic segment. Let $\mu$ be a degree $m$ monomial $M$ not divisible by $x_n$. Then $\mu \succ x_{n-1}^m$ and it follows that $\mu \in I_m$ since $x_{n-1}^m$ is contained in $I_m$ and $I_m$ is generated by a revlex initial segment. In turn, we deduce that every monomial not contained in $S_1I_m$ is divisible by $x_n$.

Consider two monomials $\mu_1, \mu_2 \in S_1I_m$ such that $\mu_1\succ\mu_2$ and $\mu_1\notin S_1I_m$. Then $\mu_1$ is divisible by $x_n$ and since $\mu_2 \prec \mu_1$, $\mu_2$ is also divisible by $x_n$. If $\mu_2\in S_1I_m$, then $\frac{\mu_2}{x_i}\in I_m$ for some $x_i$. The relation $\frac{\mu_2}{x_i}\preceq\frac{\mu_2}{x_n}\preceq\frac{\mu_1}{x_n}$ implies that $\frac{\mu_1}{x_n}\in I_m$ and thus $\mu_1\in S_1I_m$ which is a contradiction. Hence $\mu_2 \not\in S_1I_m$ and this means that $S_1 I_m$ is generated by an initial segment.

\

\noindent (2) \, It suffices to prove that the number of monomials of degree $m$ not contained in $I_m$ is equal to the number of monomials of degree $m+1$ not contained in $S_1I_m$. If $\mu$ is the least monomial in $I_m$ then $x_n\mu$ is the least monomial in $S_1I_m$. So a monomial $\mu'$ of degree $m+1$ is not in $S_1I_m$ iff $x_n\mu\succ\mu'$. The last equation implies that $\mu'$ is divisible by $x_n$, so there is a bijective map  given by $\mu'' \mapsto x_n\mu''$ from the set of monomials of degree $m$ smaller than $\mu$ to the set of monomials of degree $m+1$ smaller than $x_n\mu$.

\end{proof}

An elementary argument shows that if an ideal $I$ is generated by a lex initial segment $I_m$, then $I$ is a lex initial ideal i.e.  $I_{m'}$ is  generated by a lex initial segment for all $m'\ge m$. Moreover, by Macaulay's theorem, given any admissible Hilbert polynomial $P$, one can construct an ideal $I$ whose Hilbert polynomial is $P$ by taking the ideal generated by a suitable initial lexicographic segment. The corollary below states that the opposite holds for revlex: An ideal $\langle W \rangle$ generated by a revlex initial segment $W \subset S_m$ is never an revlex initial ideal, except in the constant Hilbert polynomial case.

\begin{coro}\label{C:revlex} Let $J$ be a graded ideal of $S$. If $J_m$ is generated by a reverse lexicographic initial segment for some $m > 0$, then $J$ has a constant Hilbert polynomial unless $J$ is generated in degrees $\ge m$.
\end{coro}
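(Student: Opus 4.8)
The plan is to prove the statement in contrapositive form: assuming $J$ is \emph{not} generated in degrees $\ge m$, I will show its Hilbert polynomial is constant. The first observation is that ``$J$ is generated in degrees $\ge m$'' is equivalent to $J_d = 0$ for all $d < m$, since in that case the minimal generators necessarily sit in degrees $\ge m$. So the working hypothesis becomes: there is a nonzero homogeneous element $f \in J_d$ with $d < m$. The overall strategy is then to reduce to Lemma~\ref{L:revlex}: I would first show that the existence of such a low-degree element forces $x_{n-1}^m \in J_m$, and then invoke the codimension identity of Lemma~\ref{L:revlex}(2) to pin the Hilbert function of $S/J$ to a constant from degree $m$ onward.

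The key step, and the point where the reverse lexicographic order does the real work, is proving $x_{n-1}^m \in J_m$. Let $\mu = \ii_\prec(f)$, a monomial of degree $d$. Since $J$ is an ideal, $x_n^{m-d} f \in J_m$; and because $J_m$ is spanned by monomials (it is an initial segment), every monomial occurring in $x_n^{m-d}f$ belongs to $J_m$, in particular its leading monomial $\ii_\prec(x_n^{m-d}f) = x_n^{m-d}\mu$. As $m - d \ge 1$, the monomial $x_n^{m-d}\mu$ has strictly positive $x_n$-exponent, whereas $x_{n-1}^m$ has $x_n$-exponent $0$; by the definition of the reverse lexicographic order this gives $x_{n-1}^m \succ x_n^{m-d}\mu$. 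Since $J_m$ is an \emph{initial} segment containing $x_n^{m-d}\mu$, it contains every larger monomial, hence $x_{n-1}^m \in J_m$. I expect this comparison, namely that the smallest degree-$m$ multiple $x_n^{m-d}\mu$ of any lower-degree $\mu$ always falls strictly below $x_{n-1}^m$, to be the crux of the whole argument; the surrounding steps are formal.

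Finally I would conclude as follows. With $x_{n-1}^m \in J_m$ in hand, Lemma~\ref{L:revlex}(2) yields, for every $l \ge 0$,
\[
\dim_k S^{m+l}V^* - \dim_k S_l J_m = \dim_k S^m V^* - \dim_k J_m =: c.
\]
Because $J$ is an ideal we have $S_l J_m \subseteq J_{m+l}$, so $\dim_k (S/J)_{m+l} = \dim_k S^{m+l}V^* - \dim_k J_{m+l} \le \dim_k S^{m+l}V^* - \dim_k S_l J_m = c$. Thus the Hilbert function of $S/J$ is bounded by the constant $c$ in all degrees $\ge m$, and since a nonconstant Hilbert polynomial tends to infinity, the Hilbert polynomial of $J$ must be constant, as claimed. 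The only genuine obstacle in this plan is the reverse-lexicographic comparison of the middle paragraph; once that is secured, the reduction and the final dimension count are routine.
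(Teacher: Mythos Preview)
Your argument is correct and follows essentially the same route as the paper: assume $J_{m'}\ne 0$ for some $m'<m$, use that $J_m$ then contains a monomial divisible by $x_n$ to force $x_{n-1}^m\in J_m$, and finish via Lemma~\ref{L:revlex}(2). The only difference is in the last step: the paper simply cites Lemma~\ref{L:revlex}(2), whereas you spell out the bounding argument $\dim_k(S/J)_{m+l}\le \dim_k S^{m+l}V^*-\dim_k S_lJ_m=c$, which is a cleaner way to handle the possibility that $J$ has generators in degrees $>m$.
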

\begin{proof} If $J$ is not generated in degrees $\ge m$ i.e. $J_{m'} \ne 0$ for some $m' < m$, then $J_m \supset S_{m-m'}J_{m'} \supset x_n^{m-m'} J_{m'}$. Since $J_m$ is generated by a reverse lexicographic initial segment, $J_m \ni x_{n-1}^m$. The assertion now follows due to Lemma~\ref{L:revlex}.(2).
\end{proof}

\begin{proof}[Proof of Theorem~\ref{T:degenerate}] We will prove that for all $m>m_0$, $\phi_{m} (H)$ is degenerate where $H$ is an irreducible component of $\hilb^P\bP^n$. Let $H$ be an irreducible component of $\hilb^P \bP(V)$. Let $\succ$ be the degree reverse lexicographic order and $m> m_o$ be an integer.
Let $N = \dim_k S_m$ and $d  =  N - P(m)$.
We consider $S_m \simeq S^mV^*$ as an $N$-dimensional $k$-vector space with the basis consisting of degree $m$ monomials ordered by $\succ$.  Then $\bigwedge^d S_m$ is the exterior product of $S_m$ with the partially ordered basis consisting of exterior products of degree $m$ monomials.

Let $\ba^{*, m} = \{\a(1), \dots, \a(d)\}$ be the maximal index set so that $x^{\a(1)}\wedge \cdots \wedge x^{\a(d)}$ is the maximal basis element of $\bigwedge^d S_m$, and let $p_{\ba^{*,m}}$ denote the corresponding Pl\"ucker coordinate.
Then $C_{\ba^{*,m}} = \{p_{\ba^{*,m}} \ne 0 \}$ is the big open cell of $\Gr_dS_m$, and its complement $\{p_{\ba^{*,m}} = 0 \}$ defines an ample divisor, namely,  the pull-back $\phi_m^*\cO_{\bP(\bigwedge^d S_m)}(+1)$ of the hyperplane divisor.

We will show that $\phi_m(H)\subset\{p_{\ba^{*,m}} = 0 \}$ using Corollary~\ref{C:revlex}. If $\phi_m(H)\not\subset\{p_{\ba^{*,m}} = 0 \}$, then there exists $I\in\phi^{-1}_m(C_{\ba^{*,m}})\cap H$ such that $[\ii_\prec(I)]_m=[I_{\ba^{*,m}}]_m$. Since $H$ is closed and $\ii_\prec(I)$ is the flat limit of an isotrivial family whose generic object is $I$,  the component $H$ contains  $J:=\ii_\prec(I)$. Since $m > m_0$ and $J$ is generated in degree $\le m_0$,  we have $S_{m-m_0}J_{m_0}=J_m$. Since $[J]_m$ is generated by an initial reverse lexicographic segment,   we may apply Corollary~\ref{C:revlex} to conclude that  the Hilbert Polynomial of $I$ is a constant, contradicting the assumption. 

So for each irreducible component $H$ of $\hilb^P \bP(V)$, $\phi_m(H)$ is contained in $\{p_{\ba^{*,m}} = 0\}$ and thus $\phi_m(\hilb^P \bP(V))$ is contained in $\{p_{\ba^{*,m}} = 0\}$ for $m>m_0$.

\end{proof}


As an example, we consider the Hilbert scheme of degree $d$ hypersurfaces of $\bP^n $. We let $S = k[x_0,\dots, x_n]$ be the homogeneous coordinate ring of $\bP^n$ and let $V^* = S_1$ as before.  Hypersurfaces have Hilbert polynomial $P(m) = \binom{n+m}m - \binom{n+m-d}{m-d}$, and the Hilbert scheme $\hilb^P \bP^n$ is naturally identified with
$
\bP(H^0(\cO_{\bP^n}(d)))
$
and $\phi_d$ is an isomorphism. On the other hand, consider the image under $\phi_{d+1}$. For any $[I] \in \hilb^P\bP^n$,  $\phi_{d+1}([I]) \in \bP(\bigwedge^{n+1} S_{d+1})$ is determined by
\[
\wedge^{d+1} I := x_0f \wedge x_1 f\wedge \cdots \wedge x_n f
\]
where $f$ is a homogeneous degree $d$ polynomial generating $I$. Therefore every monomial appearing in the wedge product $\wedge^{d+1}I$ has each variable $x_0, \dots, x_n$ with a positive exponent. It follows that $\phi_{d+1}(\hilb^P\bP^n)$ is contained in the hyperplane cut out by $p_{\ba} = 0$ where $\ba = (\ba(1), \dots, \ba(n+1))$ is chosen such that  $\{x^{\ba(1)}, \dots, x^{\ba(n+1)}\}$ are monomials of degree $d+1$ in $x_0,\dots, x_{n-1}$: Note that such $\ba$ exists since the number of degree $d+1$ monomials in $x_0,\dots, x_{n-1}$ is $\binom{n+d}{d+1}$ which is larger than $n+1$ for any $n \ge 2$ and $d \ge 1$. Hence we conclude that $\phi_{d+1}(\hilb^P\bP^n)$ is degenerate. Since the Gotzmann number of the Hilbert polynomial $P(m)$ is $d$, this is also checked from the Theorem~\ref{T:degenerate}.

\bibliographystyle{alpha}
\bibliography{sch-decomp-hilb}
\end{document}